\newcommand{\mitem}{\item\(\displaystyle} %m(athematical) enumeration item which behaves like in \[\]
\DeclareMathOperator{\ind}{index}
\newcommand{\diff}{d}    % {\mathrm{d}}
\newcommand{\defeq}{\mathrel{\vcentcolon=}}
\DeclareMathOperator{\Vor}{Vor}
\providecommand\given{} % so it exists
\newcommand\SetSymbol[1][]{
   \nonscript\,#1\vert \allowbreak \nonscript\,\mathopen{}}
\DeclarePairedDelimiterX\Set[1]{\lbrace}{\rbrace}
 { \renewcommand\given{\SetSymbol[\delimsize]} #1 }
\newcommand{\Secref}[1]{Sec.~\ref{#1}}
\newcommand{\Figref}[1]{Fig.~\ref{#1}}
\DeclareMathOperator{\supp}{supp}
\newcommand{\hbit}{\hspace*{1.5pt}}
\newcommand{\EE}{\mathbb{E}}
\newcommand{\NN}{\mathbb{N}}
\newcommand{\RR}{\mathbb{R}}
\newcommand{\ZZ}{\mathbb{Z}}
\newcommand{\mcb}{\mathcal{B}}
\newcommand{\mcx}{\mathcal{X}}
\newcommand{\mcy}{\mathcal{Y}}
\newcommand{\mfc}{\mathfrak{C}}
\newcommand{\abs}[1]{|#1|}
\newcommand{\bigabs}[1]{\bigl|#1\bigr|}
\newcommand{\norm}[1]{\|#1\|}
\newcommand{\Leb}{\mathrm{Leb}}
\newcommand{\tmu}{\tilde{\mu}}
\newcommand{\tnu}{\tilde{\nu}}
\newcommand{\one}{\mathbbm{1}}
\newcommand{\eps}{\varepsilon}
\newtheorem{theorem}{Theorem} %[section]
\newtheorem{lemma}{Lemma}
\newtheorem{remark}{Remark}
\begin{document}

\title{Semi-discrete optimal transport --- the case $p=1$}

\author{\parbox[t]{54mm}{Valentin Hartmann\footnote{Work partially supported by DFG RTG 2088.}\\ EPFL and\\ University of Goettingen}\hspace*{12mm}\parbox[t]{54mm}{Dominic Schuhmacher\\ University of Goettingen}\\[-2mm]\hspace*{2mm}}

\date{October 5, 2018}
\maketitle

\begin{abstract}
We consider the problem of finding an optimal transport plan between an absolutely continuous measure $\mu$ on $\mcx \subset \RR^d$ and a finitely supported measure $\nu$ on $\RR^d$ when the transport cost is the Euclidean distance. We may think of this problem as closest distance allocation of some ressource continuously distributed over space to a finite number of processing sites with capacity constraints.

This article gives a detailed discussion of the problem, including a comparison with the much better studied case of squared Euclidean cost (``the case $p=2$'').
We present an algorithm for computing the optimal transport plan, which is similar to the approach for $p=2$ by Aurenhammer, Hoffmann and Aronov [Algorithmica 20, 61--76, 1998] and M\'erigot~[Computer Graphics Forum 30, 1583--1592, 2011]. We show the necessary results to make the approach work for the Euclidean cost, evaluate its performance on a set of test cases, and give a number of applications. The later include goodness-of-fit partitions, a novel visual tool for assessing whether a finite sample is consistent with a posited probability density.
\vspace*{3mm}

\noindent
\textbf{MSC 2010} \, Primary 65D18; Secondary 51N20, 62-09.
\vspace*{1.5mm}

\noindent
\textbf{Key words and phrases:} Monge--Kantorovich problem;
             spatial ressource allocation;
             Wasserstein metric;
             weighted Voronoi tesselation.
\end{abstract}

\section{Introduction}
\label{sec:intro}

Optimal transport and Wasserstein metrics are nowadays among the major tools for analyzing complex data. Theoretical advances in the last decades characterize existence, uniqueness, representation and smoothness properties of optimal transport plans in a variety of different settings. Recent algorithmic advances \cite{PeyreCuturi2018} make it possible to compute exact transport plans and Wasserstein distances between discrete measures on regular grids of tens of thousands of support points, see e.g.~\cite[Section 6]{Schmitzer2016}, and to approximate such distances (to some extent) on larger and/or irregular structures, see \cite{AltschulerEtAl2017} and references therein. The development of new methodology for data analysis based on optimal transport is a booming research topic in statistics and machine learning, see e.g.~\cite{SommerfeldMunk2018,SchmitzEtAl2018,ArjovskyEtAl2017,GenevayEtAl2018,FlamaryEtAl2018}. Applications are abundant throughout all of the applied sciences, including biomedical sciences (e.g.\ microscopy or tomography images \cite{MicroscopyImaging2014,NeuroImaging2015}), geography (e.g.\ remote sensing \cite{RemoteSensing2016,RemoteSensing2017}), and computer science (e.g.\ image processing and computer graphics \cite{Papadakis2016,SolomonEtAl2015}). In brief: whenever data of a sufficiently complex structure that can be thought of as a mass distribution is available, optimal transport offers an effective, intuitively reasonable and robust tool for analysis.

More formally, for measures $\mu$ and $\nu$ on $\RR^d$ with $\mu(\RR^d)=\nu(\RR^d) < \infty$ the \emph{Wasserstein distance} of order $p \geq 1$ is defined as 
\begin{equation} \label{eq:mk}
  W_p(\mu,\nu) = \biggl( \min_{\pi} \int_{\RR^d \times \RR^d} \norm{x-y}^p \; \pi(dx,dy) \biggr)^{1/p},
\end{equation}
where the minimum is taken over all \emph{transport plans (couplings)} $\pi$ between $\mu$ and $\nu$, i.e.\ measures $\pi$ on $\RR^d \times \RR^d$ with marginals
\begin{equation*}
  \pi(A \times \RR^d) = \mu(A) \quad \text{and} \quad \pi(\RR^d \times A) = \nu(A)
\end{equation*}
for every Borel set $A \subset \RR^d$. The minimum exists by \cite[Theorem~4.1]{Villani2009} and it is readily verified, see e.g.\ \cite[after Example~6.3]{Villani2009}, that the map $W_p$ is a $[0,\infty]$-valued metric on the space of measures with fixed finite mass. The constraint linear minimization problem~\eqref{eq:mk} is known as \emph{Monge--Kantorovich problem} \cite{Kantorovich1942,Villani2009}. From an intuitive point of view, a minimizing $\pi$ describes how the mass of $\mu$ is to be associated with the mass of $\nu$ in order to make the overall transport cost minimal. 

A \emph{transport map} from $\mu$ to $\nu$ is a measurable map $T \colon \RR^d \to \RR^d$ satisfying $T_{\#}\mu=\nu$, where $T_{\#}$ denotes the push-forward, i.e.\ $(T_{\#} \mu) (A) = \mu(T^{-1}(A))$ for every Borel set $A \subset \RR^d$. We say that $T$ \emph{induces} the coupling $\pi=\pi_T$ if 
\begin{equation*}
  \pi_T(A \times B) = \mu(A \cap T^{-1}(B))
\end{equation*}
for all Borel sets $A,B \subset \RR^d$, and call the coupling $\pi$ \emph{deterministic} in that case. It is easily seen that the support of $\pi_T$ is contained in the graph of $T$. Intuitively speaking, we associate with each location in the domain of the measure $\mu$ exactly one location in the domain of the measure $\nu$ to which positive mass is moved, i.e.\ the mass of $\mu$ is not split.

The generally more difficult (non-linear) problem of finding (the $p$-th root of) 
\begin{equation}
  \inf_{T} \int_{\RR^d} \norm{x-T(x)}^p \; \mu(dx) = \inf_{T} \int_{\RR^d \times \RR^d} \norm{x-y}^p \; \pi_T(dx,dy),
\end{equation}
where the infima are taken over all transport maps $T$ from $\mu$ to $\nu$ (and are in general not attained) is known as \emph{Monge's problem} \cite{Monge1781,Villani2009}.

In practical applications, based on discrete measurement and/or storage procedures, we often face discrete measures $\mu = \sum_{i=1}^m \mu_i \delta_{x_i}$ and $\nu = \sum_{j=1}^n \nu_j \delta_{y_j}$, where $\{x_1,\ldots,x_m\}$, $\{y_1,\ldots,y_n\}$ are finite collections of support points, e.g.\ grids of pixel centers in a grayscale image. The Monge--Kantorovich problem~\eqref{eq:mk} is then simply the discrete transport problem from classical linear programming \cite{LuenbergerYe2008}:
\begin{equation} \label{eq:mkdiscr}
  W_p(\mu,\nu) = \biggl(\min_{(\pi_{ij})} \,\sum_{i=1}^m \sum_{j=1}^n d_{ij} \pi_{ij} \biggr)^{1/p},
\end{equation}
where $d_{ij} = \norm{x_i-y_j}^p$ and any measure $\pi = \sum_{i=1}^m \sum_{j=1}^n \pi_{ij} \delta_{(x_i,y_j)}$ is represented by the $m \times n$ matrix $(\pi_{ij})_{i,j}$ with nonnegative entries $\pi_{ij}$ satisfying
\begin{equation*}
  \sum_{j=1}^n \pi_{ij} = \mu_i \text{ for $1 \leq i \leq m$} \quad \text{and} \quad \sum_{i=1}^m \pi_{ij} = \nu_j \text{ for $1 \leq j \leq n$}.
\end{equation*}

Due to the sheer size of $m$ and $n$ in typical applications this is still computationally a very challenging problem; we have e.g.\ $m=n=10^6$ for $1000 \times 1000$ grayscale images, which is far beyond the performance of a standard transportation simplex or primal-dual algorithm. Recently many dedicated algorithms have been developed, such as \cite{Schmitzer2016}, which can give enormous speed-ups mainly if $p=2$ 
and can compute exact solutions for discrete transportation problems with $10^5$ support points in seconds to a few minutes, but still cannot deal with $10^6$ or more points. Approximative solutions can be computed for this order of magnitude and $p=2$ by variants of the celebrated Sinkhorn algorithm \cite{Cuturi2013,Schmitzer2016entropy,AltschulerEtAl2017}, but it has been observed that these approximations have their limitations \cite{Schmitzer2016entropy,KlattMunk2018}.

The main advantage of using $p=2$ is that we can decompose the cost function as $\norm{x-y}^2 = \norm{x}^2 + \norm{y}^2 - 2x^\top y$ and hence formulate the Monge--Kantorovich problem equivalently as
%(the square root of twice)
$\max_{\pi} \int_{\RR^d \times \RR^d} x^{\top} y \; \pi(dx,dy)$. For the discrete problem~\eqref{eq:mkdiscr} this decomposition is used in \cite{Schmitzer2016} to construct particularly simple so-called shielding neighborhoods. But also if one or both of $\mu$ and $\nu$ are assumed absolutely continuous with respect to Lebesgue measure, this decomposition for $p=2$ has clear computational advantages. For example if the measures $\mu$ and $\nu$ are assumed to have densities $f$ and $g$, respectively, the celebrated Brenier's theorem, which yields an optimal transport map that is the gradient of a
convex function $u$ \cite{McCann1995}, allows to solve Monge's problem by finding a numerical solution $u$ to the Monge-Amp{\`e}re equation $\det(D^2 u(x)) = f(x) \big/ g(\nabla u(x))$; see \cite[Section~6.3]{Santambrogio2015} and the references given there.

In the rest of this article we focus on the semi-discrete setting, meaning here that the measure $\mu$ is absolutely continuous with respect to Lebesgue measure and the measure $\nu$ has finite support. This terminology was recently used in \cite{Wolansky2015}, \cite{KitagawaEtAl2017}, \cite{GenevayEtAl2016} and \cite{BourneEtAl2018} among others. In the semi-discrete setting we can represent a solution to Monge's problem as a partition of $\RR^d$, where each cell is the pre-image of a support point of $\nu$ under the optimal transport map. We refer to such a partition as \emph{optimal transport partition}.

In the case $p=2$ this setting is well studied. It was shown in \cite{AHA1998} that an optimal transport partition always exists, is essentially unique, and takes the form of a Laguerre tessellation, a.k.a.\ power diagram. The authors proved further that the right tessellation can be found numerically by solving a (typically high dimensional) unconstrained convex optimization problem. Since Laguerre tessellations are composed of convex polytopes, the evaluation of the objective function can be done very precisely and efficiently. \cite{Merigot2011} elaborates details of this algorithm and combines it with a powerful multiscale idea. In \cite{KitagawaEtAl2017} a damped Newton algorithm is presented for the same objective function and the authors are able to show convergence with optimal rates.

In this article we present the corresponding theory for the case $p=1$. It is known from \cite{GeissKlein2013}, which treats much more general cost functions, that an optimal transport partition always exists, is essentially unique and takes the form of a weighted Voronoi tessellation, or more precisely an Apollonius diagram. We extend this result somewhat within the case $p=1$ in Theorems~\ref{thm:eu} and~\ref{thm:u} below. We prove then in Theorem~\ref{thm:phi} that the right tessellation can be found by optimizing an objective function corresponding to that from the case $p=2$. Since the cell boundaries in an Apollonius diagram in 2d are segments of hyperbolas, computations are more involved and we use a new strategy for computing integrals over cells and for performing line search in the optimization method. Details of the algorithm are given in Section~\ref{sec:algo} and the complete implementation can be downloaded from Github\footnote{\url{https://github.com/valentin-hartmann-research/semi-discrete-transport}} and will be included in the next version of the \texttt{transport}-package \cite{transport} for the statistical computing environment {\sf R} \cite{R}. Up to Section~\ref{sec:algo} the present paper is a condensed version of the thesis~\cite{Hartmann2016}, to which we refer from time to time for more details. In the remainder we evaluate the performance of our algorithm on a set of test cases (Section~\ref{sec:performance}), give a number of applications (Section~\ref{sec:applications}), and provide a discussion and open questions for further research (Section~\ref{sec:discussion}). 

At the time of finishing the present paper, it has come to our attention that Theorem~2.1 of \cite{KitagawaEtAl2017}, which is for very general cost funtions including the Euclidean distance (although the remainder of the paper is not), has a rather large overlap with our Theorem~\ref{thm:phi}. Within the case of Euclidean cost it assumes somewhat stronger conditons than our Theorem~\ref{thm:phi}, namely a compact domain $\mcx$ and a bounded density for $\mu$. In addition the statement is somewhat weaker as it does not contain our statement~(c). We also believe that due to the simpler setting of $p=1$ our proof is accessible to a wider audience and it is more clearly visible that the additional restrictions on $\mcx$ and $\mu$ are in fact not needed.

We end this introduction by providing some first motivation for studying the semi-discrete setting for $p=1$. This will be further substantiated in the application section~\ref{sec:applications}.

\subsection{Why semi-discrete?}

The semi-discrete setting appears naturally in problems of allocating a continuously distributed resource to a finite number of sites. Suppose for example that a fast-food chain introduces a home delivery service. Based on a density map of expected orders (the ``resource''), the management would like to establish delivery zones for each branch (the ``sites''). We assume that each branch has a fixed capacity (at least in the short run), that the overall capacity matches the total number of orders (peak time scenario), and that the branches are not too densely distributed, so that the Euclidean distance is actually a reasonable approximation to the actual travel distance; see~\cite{BoscoeEtAl2012}. We take up this example in Subsection~\ref{ssec:realloc}.

An important problem that builds on resource allocation is the quantization problem: Where to position a number of sites in such a way that the resulting resource allocation cost is minimal? See \cite[Section 4]{BourneEtAl2018} for a recent discussion using incomplete transport and $p=2$.

As a further application we propose in Subsection~\ref{ssec:vistool} optimal transport partitions as a simple visual tool for investigating local deviations from a continuous probability distribution based on a finite sample.

Since the computation of the semi-discrete optimal transport is linear in the resolution at which we consider the continuous measure (for computational purposes), it can also be attractive to use the semi-discrete setting as an approximation of either the fully continuous setting (if $\nu$ is sufficiently simple) or the fully discrete setting (if $\mu$ has a large number of support points). This will be further discussed in Section~\ref{sec:semidisctrans}.

\subsection{Why $p=1$?}
\label{ssec:whyp1}

The following discussion highlights some of the strengths of optimal transport based on an unsquared Euclidean distance ($p=1$), especially in the semi-discrete setting, and contrasts $p=1$ with $p=2$.

From a computational point of view the case $p=2$ can often be treated more efficiently, mainly due to the earlier mentioned decomposability, leading e.g.\ to the algorithms in \cite{Schmitzer2016} in the discrete and \cite{AHA1998,Merigot2011} in the semi-discrete setting. The case $p=1$ has the advantage that the Monge--Kantorovich problem has a particularly simple dual \cite[Particular Case~5.16]{Villani2009}, which is equivalent to Beckmann's problem \cite{Beckmann1952}~\cite[Theorem~4.6]{Santambrogio2015}.
If we discretize the measures (if necessary) to a common mesh of $n$ points, the latter is an optimization problem in $n$ variables rather than the $n^2$ variables needed for the general discrete transport formulation~\eqref{eq:mkdiscr}. Algorithms that make use of this reduction have been described in~\cite{SolomonEtAl2014} (for general discrete surfaces) and in~\cite[Section~4]{SchmitzerWirth2017} (for general incomplete transport), but their performance in a standard situation, e.g.~complete optimal transport on a regular grid in $\RR^d$, remains unclear. In particular we are not aware of any performance comparisons between $p=1$ and $p=2$. 

In the present paper we do not make use of this reduction, but keep the source measure $\mu$ truly continuous except for an integral approximation that we perform for numerical purposes. We describe an algorithm for the semi-discrete problem with $p=1$ that is reasonably fast, but cannot quite reach the performance of the algorithm for $p=2$ in \cite{Merigot2011}. This is again mainly due to the nice decomposition property of the cost function for $p=2$ or, more blatantly, the fact that we minimize for $p=2$ over partitions formed by line rather than hyperbola segments.

From an intuitive point of view $p=1$ and $p=2$ have both nice interpretations and depending on the application setting either the one or the other may be more justified. The difference is between thinking in terms of transportation logistics or in terms of fluid mechanics. If $p=1$ the optimal transport plan minimizes the cumulative \emph{distance} by which mass is transported. This is (up to a factor that would not change the transport plan) the natural cost in the absence of fixed costs or any other savings on long-distance transportation. If $p=2$ the optimal transport plan is determined by a pressureless potential flow
from $\mu$ to $\nu$ as seen from the kinetic energy minimization formulation of Benamou and Brenier \cite{BenamouBrenier2000}~\cite[Chapter 7]{Villani2009}.

The different behaviors in the two cases can be illustrated by the discrete toy example in Figure~\ref{fig:circle}. Each point along the incomplete circle denotes the location of one unit of mass of $\mu$ (blue x-points) and/or $\nu$ (red o-points). The unique solution for $p=1$ moves one unit of mass from one end of the circular structur to the other. This is how we would go about carrying boxes around to get from the blue scenario to the red scenario. The unique solution for $p=2$ on the other hand is to transport each unit a tiny bit further to the next one, corresponding to a (discretized) flow along the circle. It is straightforward to adapt this toy example for the semi-discrete or the continuous setting. A more complex semidiscrete example is given in Subsection~\ref{ssec:twonormals}.
%\vspace*{-5mm}

\begin{figure}[th]
  \hspace*{11mm}\includegraphics[width=0.36\textwidth]{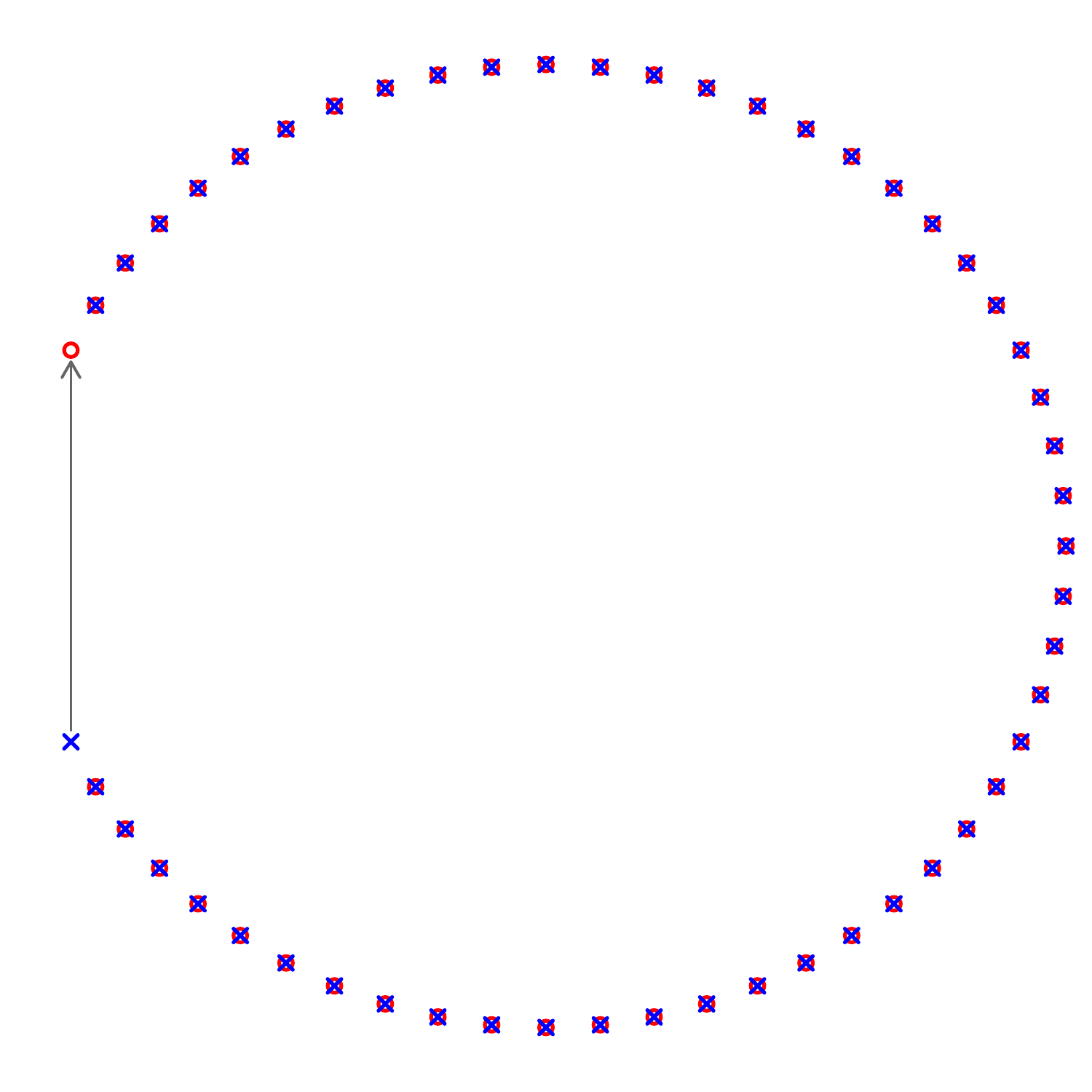}
  \hspace*{17mm}\includegraphics[width=0.36\textwidth]{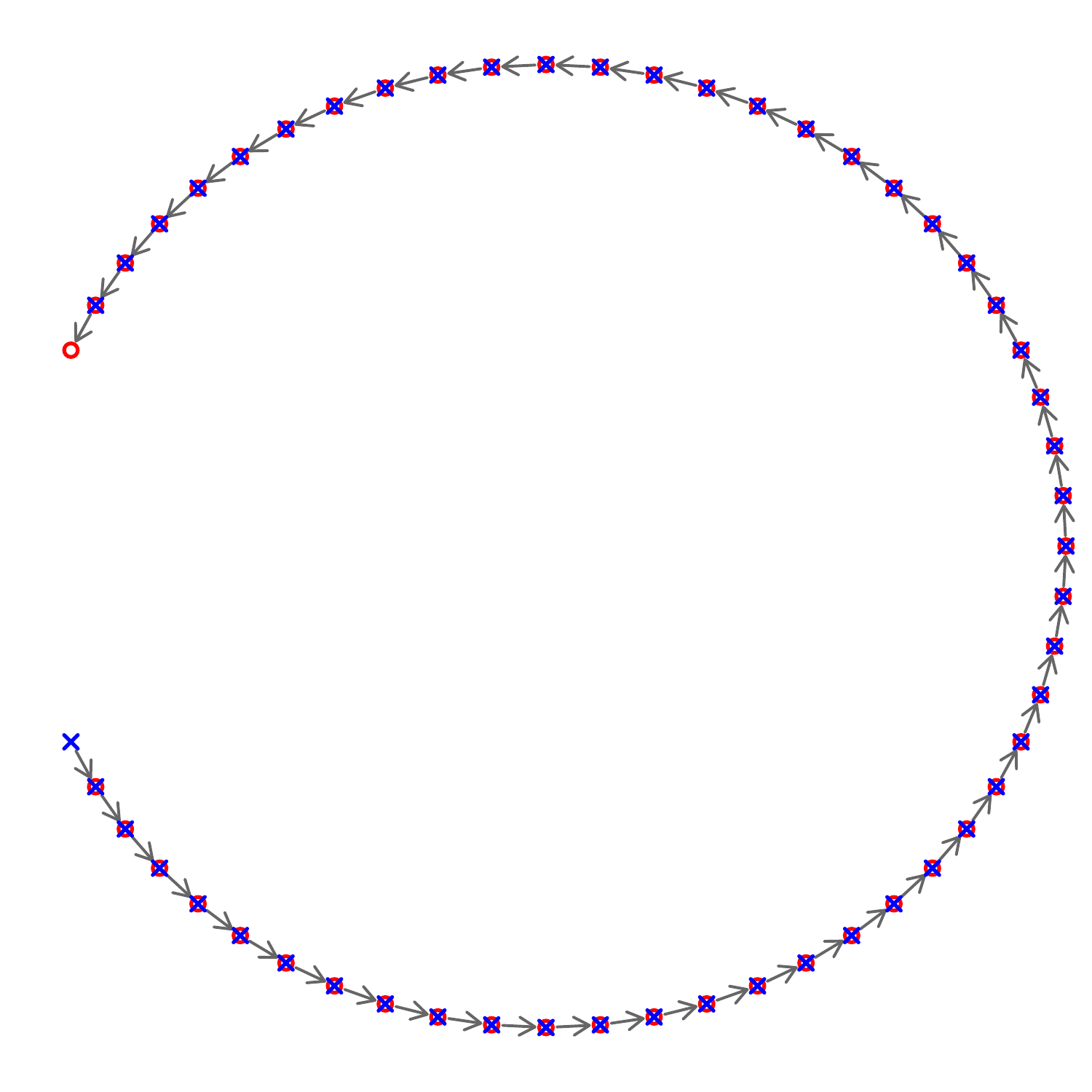}
  \vspace*{-2mm}
  
  \caption{Optimal transport maps from blue-x to red-o measure with unit mass at each point. Left: the transportation logistics solution ($p=1$); right: the fluid mechanics solution ($p=2$).}
  \label{fig:circle}
\end{figure} 

One argument in favour of the metric $W_1$ is its nice invariant properties that are not shared by the other $W_p$. In particular, considering finite measures $\mu,\nu,\alpha$ on $\RR^d$ satisfying $\mu(\RR^d) = \nu(\RR^d)$, $p \geq 1$ and $c > 0$, we have
\begin{align}
  W_1(\alpha + \mu, \alpha + \nu) &= W_1(\mu, \nu), \label{eq:add1main} \\    
  W_1(c \mu, c \nu) &= W_1(\mu, \nu).       \label{eq:scmultmain}
\end{align}
The first result is in general not true for any other $p$, the second result holds with an additional factor $c^{1/p}$ on the right hand side. We prove these statements in the appendix. These invariance properties have important implications for image analysis, where it is quite common to adjust for differring levels of brightness (in greyscale images) by affine transformations. While the above equalities show that it is safe to do so for $p=1$, it may change the resulting Wasserstein distance and the optimal transport plan dramatically for other $p$; see Appendix and Subsection~\ref{ssec:twonormals}.

It is sometimes considered problematic that optimal transport plans for $p=1$ are in general not unique. But this is not so in the semi-discrete case, as we will see in Section~\ref{sec:semidisctrans}: The minimal transport cost in~\eqref{eq:mk} is realized by a unique coupling $\pi$, which is always deterministic.
The same is true for $p=2$. A major difference in the case $p=1$ is that for $d>1$ 
each cell of the optimal transport partition contains the support point
% implicitly: this mass is positiv (we really mean *support*)
of the target measure $\nu$ that it assigns its mass to. This can be seen as a consequence of cyclical monotonicity~\cite[beginning of~Chapter~8]{Villani2009}. In contrast, for $p=2$, optimal transport cells can be separated by many other cells from their support points, which can make the resulting partition hard to interpret without drawing corresponding arrows for the assignment; see the bottom panels of Figure~\ref{fig:transports}. For this reason we prefer to use $p=1$ for the goodness-of-fit partitions considered in Subsection~\ref{ssec:vistool}.

\section{Semi-discrete optimal transport}
\label{sec:semidisctrans}

We first concretize the semi-discrete setting and introduce some additional notation. Let now $\mcx$ and $\mcy$ be Borel subsets of $\RR^d$ and let $\mu$ and $\nu$ be \emph{probability} measures on $\mcx$ and $\mcy$, respectively. This is just for notational convenience and does not change the set of admissible measures in an essential way: We may always set $\mcx=\mcy=\RR^d$ and any statement about $\mu$ and $\nu$ we make can be easily recovered for $c\mu$ and $c\nu$ for arbitrary $c>0$.

For the rest of the article it is tacitly assumed that $d \geq 2$ to avoid certain pathologies of the one-dimensional case that would lead to a somewhat tedious distinction of cases in various results for a case that is well-understood anyway. Moreover, we always require $\mu$ to be absolutely continuous with density $\varrho$ with respect to $d$-dimensional Lebesgue measure $\Leb^d$ and to satisfy
\begin{equation} \label{eq:finite_exp}
  \int_{\mcx} \norm{x} \; \mu(dx) < \infty.
\end{equation}
We asssume further that $\nu = \sum_{j=1}^n \nu_j \delta_{y_j}$, where $n \in \NN$, $y_1, \ldots, y_n \in \mcy$ and $\nu_1, \ldots \nu_n \in (0,1]$. Condition \eqref{eq:finite_exp} guarantees that
\begin{equation} \label{eq:W1bound}
  W_1(\mu,\nu) \leq \int_{\mcx} \norm{x} \; \mu(dx) + \int_{\mcy} \norm{y} \; \nu(dy)
               =: C < \infty,
\end{equation}
which simplifies certain arguments.

The set of Borel subsets of $\mcx$ is denoted by $\mcb_{\mcx}$. Lebesgue mass is denoted by absolute value bars, i.e. $\abs{A} = \Leb^d(A)$ for every $A \in \mcb_{\mcx}$.

We call a partition $\mfc = (C_j)_{1 \leq j \leq n}$ of $\mcx$ into Borel sets satisfying $\mu(C_j) = \nu_j$ for every $j$ a \emph{transport partition} from $\mu$ to $\nu$. Any such partition characterizes a transport map $T$ from $\mu$ to $\nu$, where we set $T_{\mfc}(x) = \sum_{j=1}^n y_j 1\{x \in C_j\}$ for a given transport partition $\mfc = (C_j)_{1 \leq j \leq n}$ and $\mfc_T = (T^{-1}(y_j))_{1 \leq j \leq n}$ for a given transport map $T$.
Monge's problem for $p=1$ can then be equivalently formulated as finding
\begin{equation}
  \inf_{\mfc} \int_{\mcx} \norm{x-T_{\mfc}(x)} \; \mu(dx) = \inf_{\mfc} \sum_{j=1}^n \int_{C_j} \norm{x-y_j} \; \mu(dx),
\end{equation}
where the infima are taken over all transport partitions $\mfc = (C_j)_{1 \leq j \leq n}$ from $\mu$ to $\nu$.
Contrary to the difficulties encountered for more general measures $\mu$ and $\nu$ when considering Monge's problem with Euclidean costs, we can give a clear-cut existence and uniqueness theorem in the semi-discrete case, without any further restrictions.
\begin{theorem}
\label{thm:eu}
  In the semi-discrete setting with Euclidean costs (always including~$d\geq 2$ and \eqref{eq:finite_exp}) there is a $\mu$-a.e. unique solution $T_*$ to Monge's problem. The induced coupling $\pi_{T_*}$ is the unique solution to the Monge--Kantorovich problem, yielding
  \begin{equation}  \label{eq:w1semidisc}
    W_1(\mu,\nu) = \int_{\mcx} \norm{x-T_{*}(x)} \; \mu(dx).
  \end{equation}
\end{theorem}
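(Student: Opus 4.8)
The plan is to proceed via the Kantorovich duality for $p=1$. First I would recall that by \cite[Theorem~5.10]{Villani2009} (and the bound~\eqref{eq:W1bound} which ensures finiteness), the Monge--Kantorovich problem~\eqref{eq:mk} admits an optimal coupling $\pi_*$, and there is a Kantorovich potential, i.e.\ a $1$-Lipschitz function $u \colon \RR^d \to \RR$ with $\int u \, d\mu - \int u \, d\nu = W_1(\mu,\nu)$, such that any optimal $\pi_*$ is concentrated on the set $\{(x,y) : u(x) - u(y) = \norm{x-y}\}$. Since $\nu$ is finitely supported, this forces, for $\pi_*$-a.e.\ $(x,y)$, that $y = y_j$ for some $j$ and $u(x) - u(y_j) = \norm{x - y_j}$; equivalently $x$ lies in the ``apollonius-type'' cell $C_j \defeq \Set{x \in \mcx \given u(x) - u(y_j) = \norm{x-y_j} = \min_k (u(y_k) + \norm{x-y_k}) - u(y_k)}$, wait—more cleanly: $x \in C_j$ iff $j$ achieves $\min_k\bigl(u(y_k) + \norm{x - y_k}\bigr)$ and this minimum equals $u(x)$. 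The function $x \mapsto \min_k (u(y_k) + \norm{x-y_k})$ is the $c$-transform-type $1$-Lipschitz function that must coincide $\mu$-a.e.\ with $u$ on the union of the $C_j$, which has full $\mu$-measure.

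The crux is then the following absolute-continuity argument: the pairwise intersections $C_j \cap C_k$ (for $j \neq k$) are contained in the set where $u(y_j) + \norm{x - y_j} = u(y_k) + \norm{x - y_k}$, i.e.\ a level set of the difference of two distance functions — this is (a branch of) a hyperboloid of revolution (or a hyperplane, when $u(y_j) = u(y_k)$), hence a Lebesgue-null set in $\RR^d$ for $d \geq 2$. (This is exactly the kind of statement that will also underlie the Apollonius-diagram description; I may cite \cite{GeissKlein2013} here, or give the short direct argument.) Since $\mu \ll \Leb^d$, we get $\mu(C_j \cap C_k) = 0$, so after removing a $\mu$-null set we obtain a genuine partition $(C_j')_{j}$ of a full-measure subset of $\mcx$; extend it arbitrarily to a Borel partition of all of $\mcx$. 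Define $T_*(x) = y_j$ for $x \in C_j'$. Because $\pi_*$ is concentrated on $\bigcup_j (C_j' \times \{y_j\})$ (up to null sets), its first marginal being $\mu$ forces $\mu(C_j') = \pi_*(C_j' \times \RR^d) = \pi_*(\RR^d \times \{y_j\}) = \nu_j$; hence $(C_j')_j$ is a transport partition and $T_*$ is a transport map inducing $\pi_{T_*} = \pi_*$. This already gives existence of an optimal $T_*$ and that $\pi_{T_*}$ solves~\eqref{eq:mk}, establishing~\eqref{eq:w1semidisc}.

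For uniqueness of the coupling, suppose $\pi_*, \pi_*'$ are both optimal; then so is $\tfrac12(\pi_* + \pi_*')$, and all three are concentrated on the same contact set $\{u(x) - u(y) = \norm{x-y}\}$ relative to the \emph{same} potential — here one must be slightly careful, since in principle different optimal couplings could pair with different potentials. The standard fix: fix one potential $u$; every optimal $\pi$ is supported in its contact set (the contact set of \emph{any} Kantorovich potential contains the support of \emph{every} optimal plan, by the complementary-slackness inequality $u(x)-u(y) \le \norm{x-y}$ integrated against $\pi$). Then, as above, every optimal $\pi$ is concentrated on $\bigcup_j(C_j \times \{y_j\})$, and on the full-$\mu$-measure set where the cells are disjoint this means $\pi$ is the pushforward of $\mu$ under $x \mapsto y_j$ on $C_j'$ — i.e.\ $\pi = \pi_{T_*}$ is uniquely determined. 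Uniqueness of $T_*$ $\mu$-a.e.\ follows: any optimal $T$ induces an optimal $\pi_T = \pi_{T_*}$, and two transport maps inducing the same deterministic coupling agree $\mu$-a.e.

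The main obstacle I anticipate is the geometric null-set claim $\abs{C_j \cap C_k} = 0$ together with handling the possibility of \emph{equality} $u(y_j) = u(y_k)$ (where the hyperboloid degenerates to a hyperplane) and the measure-zero ``ambiguous'' set where three or more cells meet — none of these are hard, but they need the standing assumption $d \geq 2$ and the observation that a proper algebraic hypersurface (a sheet of a quadric, or a hyperplane) has $\Leb^d$-measure zero, which combined with $\mu \ll \Leb^d$ does the job. A secondary subtlety worth a sentence is ensuring the relevant cells are Borel (they are: $C_j$ is defined by countably many continuous inequalities) and that the ``leftover'' set $\mcx \setminus \bigcup_j C_j'$ is $\mu$-null so that redistributing it does not affect the marginal constraints.
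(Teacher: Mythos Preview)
Your argument is correct, and it takes a genuinely different route from the paper's own proof.

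The paper proceeds in three separate pieces: it defers existence and $\mu$-a.e.\ uniqueness of the Monge map to the later Theorems~\ref{thm:u} and~\ref{thm:phi} (the explicit weighted-Voronoi construction and the minimization of $\Phi$); it then invokes Pratelli's theorem to conclude that the Monge infimum equals the Monge--Kantorovich minimum, so that $\pi_{T_*}$ is MK-optimal; and finally, for uniqueness of the MK plan, it gives a hands-on ``mass-swapping'' argument: disintegrate an arbitrary optimal $\pi$ into pieces $\tilde\pi_i(\cdot)=\pi(\cdot\times\{y_i\})$ with densities $\tilde\rho_i$, and show that if two supports $S_i,S_j$ overlap on a set of positive Lebesgue measure, one can find a hyperboloid level $q$ that splits the overlap and swap mass across it to strictly reduce cost, contradicting optimality. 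This yields that $\pi$ is induced by a map, which by the (forward-referenced) Monge uniqueness must be $T_*$.

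Your approach instead front-loads Kantorovich duality: you fix a $1$-Lipschitz potential $u$, read off cells $C_j=\{x:u(x)=u(y_j)+\norm{x-y_j}\}$ from the contact set, and observe that $C_j\cap C_k$ sits in a single level set $\{\norm{x-y_j}-\norm{x-y_k}=u(y_k)-u(y_j)\}$, hence is $\Leb^d$-null for $d\ge 2$. Everything then drops out at once: existence of $T_*$, equality of Monge and MK values, and uniqueness of the optimal plan (since \emph{every} optimal plan lives in the contact set of the \emph{same} $u$). In effect you have produced an adapted weight vector $w_j=-u(y_j)$ directly from duality, bypassing both Pratelli and the variational existence argument for $\Phi$ in Theorem~\ref{thm:phi}. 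The trade-off: your proof is shorter and self-contained for this statement, but relies on the duality machinery of \cite{Villani2009}; the paper's proof is more elementary at the MK-uniqueness step (no potentials, just an explicit improvement) and dovetails with the constructive, algorithmic theme of the later sections. Both are valid; yours is the cleaner route if one is willing to import duality.
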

\begin{proof}
The part concerning Monge's problem is a consequence of the concrete construction in Section~\ref{sec:OTviaVoronoi}; see Theorem~\ref{thm:u}.

Clearly $\pi_{T_*}$ is an admissible transport plan for the Monge--Kantorovich problem. Since $\mu$ is non-atomic and the Euclidean cost function is continuous, Theorem~B in \cite{Pratelli2007} implies that the minimum in the Monge--Kantorovich problem is equal to the infimum in the Monge problem, so $\pi_{T*}$ must be optimal.

For the uniqueness of $\pi_{T*}$ in the Monge--Kantorovich problem, let $\pi$ be an arbitrary optimal transport plan. Define the measures $\tilde{\pi}_i$ on $\mcx$ by $\tilde{\pi}_i(A)\defeq\pi(A\times \{y_i\})$ for all $A\in \mcb_{\mcx}$ and $1 \leq i \leq n$. Since $\sum_i \pi_i = \mu$, all $\pi_i$ are absolutely continuous with respect to $\Leb^d$ with densities $\tilde{\rho}_i$ satisfying $\sum\tilde{\rho}_i = \rho$. Set then $S_i\defeq\Set{x\in\mcx \given \tilde{\rho}_i>0}$.

Assume first that there exist $i,j \in \{1,\ldots,n\}$, $i \neq j$, such that $\abs{S_i\cap S_j}>0$.
Define $H_{<}^{i,j}(q)\defeq\Set{x\in\mcx \given \norm{x-y_i} < \norm{x-y_j} + q}$ and $H_{>}^{i,j}(q)$, $H_{=}^{i,j}(q)$ analogously. There exists a $q\in\RR$ for which both $S_i \cap S_j \cap H_{<}^{i,j}(q)$ and $S_i \cap S_j \cap H_{>}^{i,j}(q)$ have positive Lebesgue measure: Choose $q_1, q_2\in\RR$ such that $\abs{S_i \cap S_j \cap H_{<}^{i,j}(q_1)} > 0$ and $\abs{S_i \cap S_j \cap H_{>}^{i,j}(q_2)} > 0$; using binary search between $q_1$ and $q_2$, we find the desired $q$ in finitely many steps, because otherwise there would have to exist a $q_0$ such that $\abs{S_i \cap S_j \cap H_{=}^{i,j}(q_0)} > 0$, which is not possible. By the definition of $S_i$ and $S_j$ we thus have $\alpha = \pi_i(S_i \cap S_j \cap H_{>}^{i,j}(q)) > 0$ and $\beta = \pi_j(S_i \cap S_j \cap H_{<}^{i,j}(q)) > 0$. Switching $i$ and $j$ if necessary, we may assume $\alpha \leq \beta$. Define then
\begin{equation*}
\begin{split}
 \pi_i' &= \pi_i - \pi_i\vert_{S_i \cap S_j \cap H_{>}^{i,j}(q)} + \frac{\alpha}{\beta} \hbit \pi_j\vert_{S_i \cap S_j \cap H_{<}^{i,j}(q)}, \\
 \pi_j' &= \pi_j + \pi_i\vert_{S_i \cap S_j \cap H_{>}^{i,j}(q)} - \frac{\alpha}{\beta} \hbit \pi_j\vert_{S_i \cap S_j \cap H_{<}^{i,j}(q)}
\end{split}
\end{equation*}
and $\pi_k' = \pi_k$ for $k \not\in \{i,j\}$.
It can be checked immediately that the measure $\pi'$ given by $\pi'(A \times \{y_i\}) = \pi'_i(A)$ for all $A \in \mcb_{\mcx}$ and all $i \in \{1,2,\ldots,n\}$ is a transport plan from $\mu$ to $\nu$ again. It satisfies
\begin{equation*}
\begin{split}
 &\int_{\mcx\times\mcy} \norm{x-y} \; \pi'(dx, dy) - \int_{\mcx\times\mcy} \norm{x-y} \; \pi(dx, dy) \\
 &= \int_{S_i \cap S_j \cap H_{>}^{i,j}(q)} \bigl( -\norm{x-y_i} + \norm{x-y_j} \bigr) \; \pi_i(dx)
 {} + \frac{\alpha}{\beta} \int_{S_i \cap S_j \cap H_{<}^{i,j}(q)} \bigl( \norm{x-y_i} - \norm{x-y_j} \bigr) \; \pi_j(dx) \\
 &< 0,
\end{split}
\end{equation*}
because the integrands are strictly negative on the sets over which we integrate. But this contradicts the optimality of $\pi$.

We thus have proved that $\abs{S_i\cap S_j} = 0$ for all pairs with $i \neq j$. This implies that we can define a transport map $T$ inducing $\pi$ in the following way. If $x\in S_i\setminus(\cup_{j\neq i} S_j)$ for some $i$, set $T(x)\defeq y_i$. Since the intersections $S_i\cap S_j$ are Lebesgue null sets, the value of $T$ on them does not matter. So we can for example set $T(x)\defeq y_{1}$ or $T(x)\defeq y_{i_0}$ for $x\in \bigcap_{i \in I} S_{i} \setminus \bigcap_{i \in I^c} S_{i}$, where $I \subset \{1,\ldots,n\}$ contains at least to elements and $i_0 = \min(I)$. It follows that $\pi_T = \pi$. But by the optimality of $\pi$ and Theorem \ref{thm:u} we obtain $T=T_*$ $\mu$-almost surely, which implies $\pi = \pi_T = \pi_{T_*}$.
\end{proof}

It will be desirable to know in what way we may approximate the continuous and discrete Monge--Kantorovich problems by the semi-discrete problem we investigate here. 

In the fully continuous case, we have a measure $\tnu$ on $\mcy$ with density $\tilde{\varrho}$ with respect to $\Leb^d$ instead of the discrete measure $\nu$. In the fully discrete case, we have a discrete measure $\tmu = \sum_{i=1}^m \tmu_i \delta_{x_i}$ instead of the absolutely continuous measure $\mu$, where $m \in \NN$, $x_1, \ldots, x_m \in \mcx$ and $\tmu_1, \ldots \tmu_m \in (0,1]$. In both cases existence of an optimal transport plan is still guaranteed by \cite[Theorem~4.1]{Villani2009}, however we lose to some extent the uniqueness property.

One reason for this is that mass transported within the same line segment can be reassigned at no extra cost; see the discussion on transport rays in Section~6 of \cite{AmbrosioPratelli2003}. In the continuous case this is the only reason, and uniqueness can be restored by minimizing a secondary functional (e.g.\ total cost with respect to $p>1$) over all optimal transport plans; see Theorem~7.2 in \cite{AmbrosioPratelli2003}. 

In the discrete case uniqueness depends strongly on the geometry of the support points of $\tmu$ and $\nu$. In addition to collinearity of support points, equality of interpoint distances can also lead to non-unique solutions. While uniqueness can typically be achieved when the support points are in sufficiently general position, we are not aware of any precise result to this effect.

When approximating the continuous problem with measures $\mu$ and $\tnu$ by a semi-discrete problem, we quantize the measure $\tnu$ to obtain a discrete measure $\nu = \sum_{j=1}^n \nu_j \delta_{y_j}$, where $\nu_j = \tnu(N_j)$ for a partition $(N_j)$ of $\supp(\tnu)$. The error we commit in Wasserstein distance by discretization of $\tnu$ is bounded by the quantization error, i.e.
\begin{equation} \label{eq:quanterror}
   \bigabs{W_1(\mu,\tnu) - W_1(\mu,\nu)} \leq W_1(\tnu,\nu) \leq \sum_{j=1}^n \int_{N_j} \norm{y-y_j} \; \tnu(dy).
\end{equation}
We can compute $W_1(\tnu,\nu)$ exactly by solving another semi-discrete transport problem, using the algorithm described in Section~\ref{sec:algo} to compute an optimal partition $(N_j)$ for the second inequality above. However, choosing $\nu$ for given $n$ in such a way that $W_1(\tnu,\nu)$ is minimal is usually practically infeasible. So we would use an algorithm that makes $W_1(\tnu,\nu)$ reasonably small, such as a suitable version of Lloyd's algorithm; see Subsection~\ref{ssec:multiscale} below.

When approximating the discrete problem with measures $\tmu$ and $\nu$ by a semi-discrete problem, we blur each mass $\tmu_i$ of $\tmu$ over a neighborhood of $x_i$ using a probability density $f_i$, to obtain a measure $\mu$ with density $\varrho(x) = \sum_{i=1}^m \tmu_i f_i(x_i)$. Typical examples use $f_i(x) = \frac{1}{h^d} \varphi\bigl(\frac{x-x_i}{h}\bigr)$, where $\varphi$ is the standard normal density and the bandwidth $h>0$ is reasonably small, or $f_i(x) = \frac{1}{\abs{M_i}} \one_{M_i}(x)$, where $M_i$ is some small neighborhood of $x_i$. In practice, discrete measures are often available in the form of images, where the support points $x_i$ form a fine rectangular grid; then the latter choice of $f_i$s is very natural, where the $M_i$s are just adjacent squares, each with an $x_i$ at the center.
There are similar considerations for the approximation error as in the fully continuous case above. In particular the error we commit in Wasserstein distance is bounded by the blurring error
\begin{equation} \label{eq:blurrerror}
   \bigabs{W_1(\tmu,\nu) - W_1(\mu,\nu)} \leq W_1(\tmu,\mu) \leq \sum_{i=1}^m \tmu_i \int_{\RR^d} \norm{x-x_i} f(x) \; dx.
\end{equation}
The right hand side is typically straightforward to compute exactly, e.g.\ in the normal density and grid cases described above. It can be made small by choosing the bandwidth $h$ very small or picking sets $M_i$ of small radius $r = \sup_{x \in M_i} \norm{x-x_i}$. 

What about the approximation properties of the optimal transport plans obtained by the semi-discrete setting?
Theorem~5.20 in \cite{Villani2009} implies for $\nu^{(k)} \to \tnu$ weakly and $\mu^{(k)} \to \tmu$ weakly that every subsequence of the sequence of optimal transport plans $\pi^{(k)}_*$ between $\mu^{(k)}$ and $\nu^{(k)}$ has a further subsequence that converges weakly to an optimal transport plan $\pi_*$ between $\mu$ and $\nu$. This implies that for every $\eps>0$ there is a $k_0 \in \NN$ such that for any $k \geq k_0$ the plan $\pi^{(k)}$ is within distance $\eps$ (in any fixed metrization of the weak topology) of \emph{some} optimal transport plan between $\mu$ and $\nu$,
which is the best we could have hoped for in view of the non-uniqueness of optimal transport plans we have in general. If (in the discrete setting) there is a unique optimal transport plan $\pi_*$, this yields that $\pi_*^{(k)} \to \pi_*$ weakly.

\section{Optimal transport maps via weighted Voronoi tessellations}
\label{sec:OTviaVoronoi}

As shown for bounded $\mcx$ in \cite{GeissKlein2013}, the solution to the semi-discrete transport problem has a nice geometrical interpretation, which is similar to the well-known result in~\cite{AHA1998}: We elaborate below that the sets $C^{*}_j$ of the optimal transport partition are the cells of an additively weighted Voronoi tessellation of $\mcx$ around the support points of $\nu$.

For the finite set of points $\{y_1, \dots, y_n\}$ and a vector $w\in\RR^n$ that assigns to each $y_j$ a weight $w_j$, the \emph{additively weighted Voronoi tessellation} is the set of cells
\[\Vor_w(j) = \Set{x\in\mcx \given \norm{x-y_j} - w_j \leq \norm{x-y_k} - w_k \quad\text{for all } k \neq j}, \quad j=1, \dots, n.\]
Note that adjacent cells $\Vor_w(j)$ and $\Vor_w(k)$ have disjoint interiors. The intersection of their boundaries is a subset of $H = \Set{x\in\mcx \given \norm{x-y_j} - \norm{x-y_k} = w_j - w_k}$, which is easily seen to have Lebesgue measure (and hence $\mu$-measure) zero. If $d=2$, the set $H$ is a branch of a hyperbola with foci at $y_j$ and $y_k$. It may also be interpreted as the set of points that have the same distance from the spheres $S(y_j,w_j)$ and $S(y_k,w_k)$, where $S(y,w) = \Set{x \in \mcx \given \norm{x-y} = w}$. See Figure~\ref{fig:voronoi} for an illustration of these properties.
\begin{figure}
	\centering
	\includegraphics[width=0.8\textwidth]{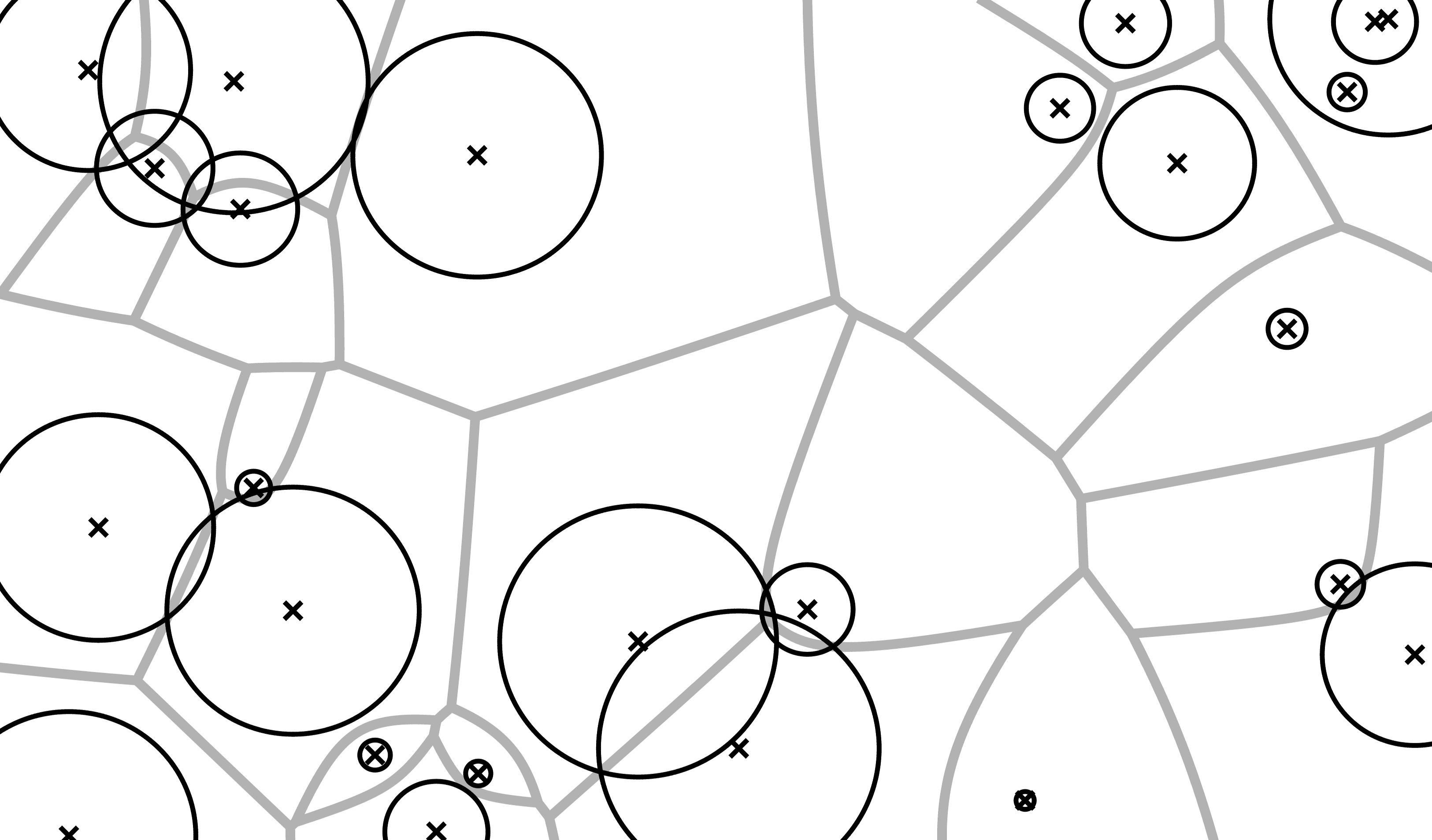}
	\caption{An additively weighted Voronoi tessellation with 25 cells.}
	\label{fig:voronoi}
\end{figure}

Of course not all weighted Voronoi tessellations are valid transport partitions from~$\mu$ to~$\nu$. But suppose we can find a weight vector $w$ such that the resulting Voronoi tessellation satisfies indeed $\mu(\Vor_w(j)) = \nu_j$ for every $j \in \{1,\ldots,n\}$; we call such a $w$ \emph{adapted} to $(\mu,\nu)$. Then this partition is automatically optimal.
\begin{theorem}
\label{thm:u}
  If $w\in\RR^n$ is adapted to $(\mu, \nu)$, then $(\Vor_w(j))_{1 \leq j \leq n}$ is the $\mu$-almost surely unique optimal transport partition from $\mu$ to $\nu$.
\end{theorem}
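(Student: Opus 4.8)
The plan is to exhibit the Kantorovich potential that is hidden in the weight vector $w$ and then to use only the elementary ``weak duality'' inequality; this keeps the argument self-contained and, in particular, free of any boundedness assumption on $\mcx$. First I would set
\[
  \varphi(x) \defeq \min_{1\le j\le n}\bigl(\norm{x-y_j} - w_j\bigr),
\]
observe that it is $1$-Lipschitz as a finite minimum of $1$-Lipschitz functions, and note that $-\max_j w_j \le \varphi(x) \le \norm{x}+\norm{y_1}-w_1$, so $\varphi \in L^1(\mu)$ by \eqref{eq:finite_exp}. Its defining property is that $\varphi(x) + w_j \le \norm{x-y_j}$ for all $x\in\mcx$ and all $j$, with equality precisely when $x \in \Vor_w(j)$.

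Next I would record the two halves of the duality estimate. For \emph{any} transport plan $\pi$ from $\mu$ to $\nu$, integrating $\varphi(x)+w_j\le\norm{x-y_j}$ against $\pi$ (using that $\nu$ is concentrated on $\{y_1,\dots,y_n\}$) gives
\[
  \int_{\mcx\times\mcy}\norm{x-y}\;\pi(dx,dy)\ \ge\ \int_{\mcx}\varphi\;d\mu + \sum_{j=1}^n w_j\nu_j\ =:\ L,
\]
a finite real number. Conversely, since $w$ is adapted we have $\mu(\Vor_w(j))=\nu_j$ and $\sum_j\nu_j=1=\mu(\mcx)$, so after discarding the $\mu$-null pairwise intersections of the cells we obtain a genuine transport partition with transport map $T_*$, whose cost is $\sum_j\int_{\Vor_w(j)}(\varphi+w_j)\;d\mu = \int_{\mcx}\varphi\,d\mu+\sum_j w_j\,\mu(\Vor_w(j)) = L$. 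This simultaneously shows $W_1(\mu,\nu)=L$ and that $(\Vor_w(j))_{1\le j\le n}$ is optimal.

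For $\mu$-a.s.\ uniqueness I would take an arbitrary optimal transport partition $(C_j')_{1\le j\le n}$ with induced coupling $\pi_{T'}$ and use optimality together with $\mu(C_j')=\nu_j$ to get
\[
  0 \;=\; \int_{\mcx\times\mcy}\norm{x-y}\;\pi_{T'}(dx,dy) - L \;=\; \sum_{j=1}^n\int_{C_j'}\bigl(\norm{x-y_j}-\varphi(x)-w_j\bigr)\;\mu(dx).
\]
Every integrand is nonnegative, so each integral vanishes, forcing $\norm{x-y_j}=\varphi(x)+w_j$ — equivalently $x\in\Vor_w(j)$ — for $\mu$-a.e.\ $x\in C_j'$; that is, $\mu(C_j'\setminus\Vor_w(j))=0$ for every $j$. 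To upgrade this to $\mu\bigl(C_j'\,\triangle\,\Vor_w(j)\bigr)=0$ I would note $\Vor_w(j)\setminus C_j'\subseteq\bigcup_{k\ne j}\bigl(\Vor_w(j)\cap C_k'\bigr)$ and bound $\mu(\Vor_w(j)\cap C_k')\le\mu(\Vor_w(j)\cap\Vor_w(k))=0$, using the previous step and the fact (already established in the text) that distinct weighted Voronoi cells overlap in a Lebesgue-null set. This gives $T'=T_*$ $\mu$-a.e.

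The argument is short precisely because adaptedness of $w$ is assumed; the only two points that need a little care are the integrability of $\varphi$ (which is exactly where \eqref{eq:finite_exp} does the work, and which is what lets us dispense with the compactness of $\mcx$ assumed in \cite{GeissKlein2013}) and the final symmetric-difference bookkeeping. I do not expect a genuine obstacle beyond these; an alternative, less computational route to uniqueness would invoke cyclical monotonicity of the support of an optimal plan, but the potential-based estimate above is cleaner and keeps the proof elementary.
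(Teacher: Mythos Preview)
Your argument is correct. The paper does not actually prove Theorem~\ref{thm:u}; it only cites \cite[Theorem~2]{GeissKlein2013} for bounded $\mcx$ and refers to \cite[Theorem~3.2]{Hartmann2016} for the extension to unbounded $\mcx$. Your proof, by contrast, is self-contained: you introduce the Kantorovich potential $\varphi(x)=\min_j(\norm{x-y_j}-w_j)$, verify $\varphi\in L^1(\mu)$ via \eqref{eq:finite_exp}, and use only the weak-duality inequality $\norm{x-y_j}\ge\varphi(x)+w_j$ to get both optimality (the Voronoi partition attains the lower bound $L=\int\varphi\,d\mu+\sum_j w_j\nu_j$) and uniqueness (equality in the nonnegative integrands forces $C_j'\subset\Vor_w(j)$ up to $\mu$-null sets, and the symmetric-difference bookkeeping is handled cleanly). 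No appeal to strong duality or compactness is needed.

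It is worth noting that your potential $\varphi$ is precisely the integrand of the function $\Psi$ appearing in the paper's proof of Theorem~\ref{thm:phi}, and your lower bound $L$ equals $-\Phi(w)$; so your argument in effect identifies $-\Phi(w)$ as the value of the dual problem evaluated at the pair $(\varphi,w)$, which is a pleasant structural observation that the paper leaves implicit. What your route buys is transparency and independence from the cited references; what the paper's route buys is brevity on the page.
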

A proof was given in \cite[Theorem~2]{GeissKlein2013} for more general distance functions, but required $\mcx$ to be bounded. For the Euclidean distance we consider here, we can easily extend it to unbounded $\mcx$; see~\cite[Theorem~3.2]{Hartmann2016}.

Having identified this class of optimal transport partitions, it remains to show that for each pair $(\mu, \nu)$ we can find an adapted weight vector. We adapt the approach of \cite{AHA1998} to the case $p=1$, which gives us a constructive proof that forms the basis for the algorithm in Section \ref{sec:algo}. Our key tool is the function $\Phi$ defined below.
\begin{theorem}
\label{thm:phi}
	Let $\Phi : \RR^n \to \RR$,
\[\Phi(w) = \sum_{j=1}^n\left(-\nu_j w_j - \int_{\Vor_w(j)} \left(\lVert x - y_j\rVert - w_j\right) \; \mu(\diff x)\right).\]
Then
\begin{enumerate}[label=\alph*)]
	\item \(\Phi\) is convex; \label{enum:phi1}
\item \(\Phi\) is continuously differentiable with partial derivatives
\[\frac{\partial\Phi}{\partial w_j}(w) = -\nu_j+\mu(\Vor_w(j));\] \label{enum:phi2}
\item $\Phi$ takes a minimum in $\RR^n$. \label{enum:phi3}
\end{enumerate}
\end{theorem}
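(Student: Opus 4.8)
The plan is to establish the three claims essentially in the order (b) $\Rightarrow$ (a) $\Rightarrow$ (c), since convexity and the existence of a minimum are most naturally derived once we have the clean formula for the gradient. For part (b), I would fix an index $j$ and a direction and differentiate $\Phi$ with respect to $w_j$. The key observation is that the integrand $\lVert x-y_j\rVert - w_j$, evaluated at a point $x$ on the moving cell $\Vor_w(j)$, matches the competing integrand $\lVert x-y_k\rVert - w_k$ precisely on the (Lebesgue-null) cell boundaries, so the boundary terms arising from the dependence of the domains $\Vor_w(j)$ on $w$ cancel. Concretely, write $\Phi(w) = \sum_j \bigl(-\nu_j w_j\bigr) - \int_{\mcx} g_w(x)\,\mu(dx)$ where $g_w(x) = \min_k\bigl(\lVert x-y_k\rVert - w_k\bigr)$ is the pointwise minimum; then $g_w$ is, for each fixed $x$ outside a null set, a smooth (indeed affine in $w$) function of $w$ near a given $w_0$, with $\partial g_w/\partial w_j = -\one\{x \in \Vor_{w}(j)\}$ almost everywhere. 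A dominated-convergence argument — using that $\lvert g_w(x)\rvert$ is bounded by $\max_k \lVert x - y_k\rVert + \lVert w\rVert_\infty \le \lVert x\rVert + \mathrm{const}$, which is $\mu$-integrable by \eqref{eq:finite_exp} — lets me differentiate under the integral sign, giving $\partial\Phi/\partial w_j(w) = -\nu_j + \mu(\Vor_w(j))$. Continuity of this partial derivative then follows because $w \mapsto \mu(\Vor_w(j))$ is continuous: if $w^{(k)} \to w$, the indicator $\one_{\Vor_{w^{(k)}}(j)}$ converges to $\one_{\Vor_w(j)}$ $\mu$-almost everywhere (the only points where convergence can fail lie on some boundary $\{\lVert x-y_j\rVert - \lVert x-y_k\rVert = w_j - w_k\}$, a $\mu$-null set), and dominated convergence applies.

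For part (a), the cleanest route is to observe from the formula $\Phi(w) = -\sum_j \nu_j w_j - \int_{\mcx} g_w(x)\,\mu(dx)$ that $g_w(x) = \min_k(\lVert x-y_k\rVert - w_k)$ is, as a pointwise minimum of affine functions of $w$, concave in $w$; hence $-\int g_w\,d\mu$ is convex, the linear term $-\sum_j \nu_j w_j$ is convex, and $\Phi$ is convex as their sum. (Alternatively, having (b) in hand, one can check that the Hessian is the Gram-type matrix of the surface measures of the cell interfaces and is positive semidefinite, but the pointwise-minimum argument is shorter and avoids second-order computations.)

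For part (c), I would argue that $\Phi$ is coercive in a suitable weak sense and use convexity to promote this to attainment of the minimum. First note $\Phi$ is invariant under adding a constant to all coordinates: $\Phi(w + t\mathbf{1}) = \Phi(w)$, since shifting all weights equally shifts $g_w$ by $-t$ while the linear term changes by $-t\sum_j\nu_j = -t$, and $\int_{\mcx}(-(-t))\mu(dx)$... — more carefully, $g_{w+t\mathbf 1} = g_w - t$, so $-\int g_{w+t\mathbf 1}\,d\mu = -\int g_w\,d\mu + t$, while $-\sum_j\nu_j(w_j+t) = -\sum_j\nu_j w_j - t$; the two changes cancel. So it suffices to minimize $\Phi$ over the hyperplane $\{w : \sum_j w_j = 0\}$. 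On this hyperplane I claim $\Phi(w) \to \infty$ as $\lVert w\rVert \to \infty$: if some coordinate, say $w_j$, is very large and positive while $\sum_k w_k = 0$, then some other coordinate is very negative, and more to the point the cell $\Vor_w(j)$ grows to cover $\mu$-almost all of $\mcx$ while the term $-\nu_j w_j$ contributes a large negative amount that is not compensated — here one uses the partial derivative formula from (b): along a ray, $\frac{d}{dt}\Phi(w_0 + t v) = \sum_j v_j(-\nu_j + \mu(\Vor_{w_0 + tv}(j)))$, and for $t$ large this is bounded below by a positive constant whenever $v$ points away from the diagonal, because the measures $\mu(\Vor_{w_0+tv}(j))$ stabilize (cells with the largest $v_j$ eventually absorb all the mass). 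Establishing this coercivity estimate carefully is the main obstacle: one must show that for a direction $v$ orthogonal to $\mathbf 1$ with $v \neq 0$, letting $J = \argmax_j v_j$, the cells $\Vor_{w_0 + tv}(j)$ for $j \notin J$ have $\mu$-measure tending to $0$, whence $\frac{d}{dt}\Phi \ge \sum_{j \in J} v_j\mu(\Vor(j)) - \sum_{j \in J}v_j\nu_j + o(1) = (\max_j v_j)\bigl(1 - \sum_{j\in J}\nu_j\bigr) + \text{(negative terms from }j\notin J) + o(1)$, and a short computation using $\sum v_j = 0$ shows this is bounded below by a positive multiple of $\lVert v\rVert$. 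Given coercivity on the hyperplane and continuity (from (b)), $\Phi$ attains its minimum on a large closed ball, hence on the whole hyperplane, hence on all of $\RR^n$ by the translation invariance.
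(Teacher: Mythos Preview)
Your arguments for (a) and (b) are essentially the paper's: both write $\Phi(w)=-\sum_j\nu_jw_j-\int_{\mcx} g_w\,d\mu$ with $g_w(x)=\min_k(\norm{x-y_k}-w_k)$ concave in $w$ as a pointwise minimum of affine functions, and both compute the partial derivatives by an envelope argument (the paper sandwiches the difference quotient of $\Psi=\int g_w\,d\mu$ using the representation $\Psi=\inf_f\Psi_f$; you differentiate under the integral --- note that the dominating bound should be on the difference quotient, which is $\le 1$ since $g_w$ is $1$-Lipschitz in each $w_j$, rather than on $g_w$ itself).

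For (c) you take a genuinely different route. The paper picks a minimizing sequence, normalizes so that all entries are nonnegative, and by passing to subsequences splits the indices into a bounded part $A$ and an unbounded part $B$; it then shows $B=\emptyset$ via a direct lower bound on $\Phi$ that would otherwise force $\Phi\to+\infty$ along the sequence. You instead exploit the invariance $\Phi(w+t\mathbf 1)=\Phi(w)$ to restrict to the hyperplane $\{\sum_jw_j=0\}$ and argue coercivity there using the gradient formula from (b): along any ray $w_0+tv$ with $\sum_jv_j=0$, $v\neq 0$, the cells with $j\notin J:=\argmax_kv_k$ eventually become empty, so $\frac{d}{dt}\Phi(w_0+tv)\to M-\sum_jv_j\nu_j$ where $M=\max_jv_j$. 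This is correct and arguably more conceptual; two small points should be tightened. First, your intermediate ``negative terms'' description is not quite right (the contributions $-v_j\nu_j$ for $j\notin J$ need not all be negative), but the clean form $M-\sum_jv_j\nu_j$ is positive simply because $\sum_jv_j\nu_j<M\sum_j\nu_j=M$, the inequality being strict since not all $v_j$ equal $M$ and all $\nu_j>0$. Second, positivity of the limiting directional derivative along each ray gives only radial coercivity; you need the standard fact that for a continuous convex function this already forces bounded sublevel sets (if $\Phi(x_k)\le C$ with $\norm{x_k}\to\infty$ and $x_k/\norm{x_k}\to v$, convexity gives $\Phi(tv)\le\Phi(0)$ for all $t>0$, a contradiction). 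The paper's argument for (c) is self-contained and does not use (b); yours is shorter once (b) is in hand.
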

\begin{remark}
\label{rem:masterplan}
  Let  $w^* \in \RR^n$ be a minimizer of $\Phi$. Then by Theorem~\ref{thm:phi}\hbit b)
  \[\mu(\Vor_{w^*}(j))-\nu_j = \frac{\partial\Phi}{\partial w_j}(w^*) = 0 \quad \text{for every $j \in \{1,\ldots,n\}$},\]
  i.e.\ $w_{*}$ is adapted to $(\mu, \nu)$. Theorem~\ref{thm:u} yields that $(Vor_{w^*}(j))_{1 \leq j \leq n}$ is the $\mu$-almost surely unique optimal transport partition from $\mu$ to $\nu$.
\end{remark}
\begin{proof}[of Theorem~\ref{thm:phi}]
We take a few shortcuts; for full technical details see Chapter~3 of~\cite{Hartmann2016}.

Part~\ref{enum:phi1} relies on the observation that $\Phi$ can be written as
\[\Phi(w) = \sum_j(-\nu_j w_j) - \Psi(w) \]
where
\[\Psi(w) = \int_{\mcx} (\norm{x - T^w(x)} - w_{T^w(x)}) \; \mu(\diff x),\]
$T^w$ denotes the transport map induced by the Voronoi tessellation with weight vector~$w$ and we write $w_{y_j}$ instead of $w_j$ for convenience. By definition of the weighted Voronoi tessellation $\Psi$ is the infimum of the affine functions
\[\Psi_f \colon \RR^n \to \RR, \ w \mapsto \int_{\mcx} (\norm{x - f(x)} - w_{f(x)}) \; \mu(\diff x)\]
over all measurable maps $f$ from $\mcx$ to $\mcy$. Since pointwise infima of affine functions are concave and the first summand of $\Phi$ is linear, we see that $\Phi$ convex.

By geometric arguments it can be shown that $[w \mapsto \mu(\Vor_w(j))]$ is continuous; see~\cite[Lemma~3.3]{Hartmann2016}. A short computation involving the representation $\Psi(w) = \inf_f \Psi_f(w)$ used above yields for the difference quotient of $\Psi$, writing $e_j$ for the $j$-th standard basis vector and letting $h \neq 0$,
\[ \biggl| \frac{\Psi(w+he_j)-\Psi(w)}{h} + \mu(\Vor_w(j)) \biggr| \leq \bigl| -\mu(\Vor_{w+he_j}(j)) + \mu(\Vor_w(j)) \bigr| \longrightarrow 0 \]
as $h \to 0$. This implies that $\Psi$ is $C^1$ with (continuous) $j$-th partial derivative $-\mu(\Vor_w(j))$ and hence statement~\ref{enum:phi2} follows.

Finally, for the existence of a minimizer of $\Phi$ we consider an arbitrary sequence $(w^{(k)})_{k\in\NN}$ of weight vectors in $\RR^n$ with
\[\lim_{k\to\infty} \Phi(w^{(k)}) = \inf_{w\in\RR^n} \Phi(w).\]
We show below that a suitably shifted version of $(w^{(k)})_{k\in\NN}$ that has the same $\Phi$-values contains a bounded subsequence. This subsequence then has a further subsequence $(u^{(k)})$ which converges towards some $u \in \RR^n$. Continuity of $\Phi$ yields 
\[\Phi(u) = \lim_{k\to\infty} \Phi(u^{(k)}) = \inf_{w\in\RR^n} \Phi(w)\]
and thus statement~\ref{enum:phi3}.

To obtain the bounded subsequence, note first that adding to each weight the same constant neither affects the Voronoi tessellation nor the value of $\Phi$. We may therefore assume $w_j^{(k)} \geq 0$, $1 \leq j \leq n$, for all $k \in \NN$. Choosing an entry $i$ and an infinite set $K\subset\NN$ appropriately leaves us with a sequence $(w^{(k)})_{k\in K}$ satisfying $w_i^{(k)} \geq w_j^{(k)}$ for all $j$ and $k$. Taking a further subsequence $(w^{(l)})_{l \in \NN}$ for some infinite $L\subset K$ allows the choice of an $R \geq 0$ and the partitioning of $\{1,\dots,n\}$ into two sets $A$ and $B$ such that for every $l \in \NN$
\begin{enumerate}[label=\roman*)]
	\mitem 0 \leq w_i^{(l)} - w_j^{(l)} \leq R \quad \text{if } j \in A,\) \label{enum:subsequence1}
	\mitem w_i^{(l)} - w_j^{(l)} \geq \ind(l) \quad \text{if } j \in B,\) \label{enum:subsequence2}
\end{enumerate}
where $\ind(l)$ denotes the rank of $l$ in $L$. 

Assume that $B \neq \emptyset$. The Voronoi cells with indices in $B$ will at some point be shrunk to measure zero, meaning there exists an $N\in\NN$ such that
\[\sum_{j\in A} \mu\bigl(\Vor_{w^{(l)}}(j)\bigr) = 1 \quad\text{for all } l \geq N.\]
Write
\[ \underline{w}_A^{(l)} = \min_{i \in A} \hbit w_i^{(l)} \quad \text{and} \quad \overline{w}_B^{(l)} = \max_{i \in B} \hbit w_i^{(l)},\]
and recall the constant $C$ from \eqref{eq:W1bound}, which may clearly serve as an upper bound for the transport cost under an \emph{arbitrary} plan. We then obtain for every $l \geq N$ 
\begin{equation*}
\begin{split}
  \Phi(w^{(l)}) &= \sum_{j=1}^n \biggl( -\nu_j w_j^{(l)} - \int_{\Vor_{w^{(l)}}(j)} \bigl( \norm{x-y_j} - w_j^{(l)} \bigr) \; \mu(dx) \biggr) \\
  &\geq -C + \sum_{j=1}^n w_j^{(l)} \Bigl( \mu\bigl(\Vor_{w^{(l)}}(j)\bigr) - \nu_j \Bigr) \\
  &=  -C + \sum_{j \in A} w_j^{(l)} \Bigl( \mu\bigl(\Vor_{w^{(l)}}(j)\bigr) - \nu_j \Bigr)
         - \sum_{j \in B} w_j^{(l)} \nu_j \\
  &\geq -C-R + \underline{w}_A^{(l)} \biggl( 1 - \sum_{j \in A} \nu_j \biggr)
         - \overline{w}_B^{(l)} \sum_{j \in B} \nu_j \\
  &\geq -C-2R + \ind(l),
\end{split}
\end{equation*}
which contradicts the statement $\lim_{k\to\infty} \Phi(w^{(k)}) = \inf_{w\in\RR^n} \Phi(w) < \infty$. Thus we have $B=\emptyset$.

We can then simply turn $(w^{(l)})_{l\in L}$ into a bounded sequence by substracting the minimal entry $\underline{w}^{(l)} = \min_{1 \leq i \leq n} w_i^{(l)}$ from each $w_j^{(l)}$ for all $l \in L$.
\end{proof}

\section{The algorithm}
\label{sec:algo}

The previous section provides the theory needed to compute the optimal transport partition. It is sufficient to find a vector $w^*$ where $\Phi$ is locally optimal. By convexity, $w^*$ is then a global minimizer of $\Phi$ and Remark~\ref{rem:masterplan} identifies the $\mu$-a.e.\ unique optimal transport partition as $(\Vor_{w^*}(j))_{1 \leq j \leq n}$.

For the optimization process we can choose from a variety of methods thanks to knowing the gradient $\nabla \Phi$ of $\Phi$ analytically from Theorem~\ref{thm:phi}. We consider iterative methods that start at an initial weight vector $w^{(0)}$ and apply steps of the form
\begin{equation*}
  w^{(k+1)} = w^{(k)} + t_k \hbit \Delta w^{(k)}, \quad k \geq 0,
\end{equation*}
where $\Delta w^{(k)}$ denotes the search direction and $t_k$ the step size.

Newton's method would use $\Delta w^{(k)} = -\bigl( D^2 \Phi(w^{(k)}) \bigr)^{-1} \nabla \Phi(w^{(k)})$, but the Hessian matrix $D^2 \Phi(w^{(k)})$ is not available to us. We therefore use a quasi-Newton method that makes use of the gradient. Just like \cite{Merigot2011} for the case $p=2$, we have obtained many good results using L-BFGS \cite{Nocedal80}, the limited-memory variant of the Broyden--Fletcher--Goldfarb--Shanno algorithm, which uses the value of the gradient at the current as well as at preceding steps for approximating the Hessian. The limited-memory variant works without storing the whole Hessian of size $n\times n$, which is important since in applications our $n$ is typically large.

To determine a suitable step size $t_k$ we use the Armijo rule \cite{Armijo1966}, which has proven to be well suited for our problem. It considers different values for $t_k$ until it arrives at one that sufficiently decreases $\Phi$. We also considered replacing the Armijo rule with the strong Wolfe conditions \cite{Wolfe1969,Wolfe1971} as done in \cite{Merigot2011}, which contain an additional decrease requirement on the gradient. In our case, however, this requirement could often not be fulfilled because of the pixel splitting method used for computing the gradient (cf. \Secref{ssec:computation}), which made it less suited.

\subsection{Multiscale approach to determine starting value}
\label{ssec:multiscale}

To find a good starting value $w^{(0)}$ we use a multiscale method similar to the one proposed in~\cite{Merigot2011}. We first create a decomposition of $\nu$, i.e.\ a sequence $\nu = \nu^{(0)}, \dots, \nu^{(L)}$ of measures with decreasing cardinality of the support. Here $\nu^{(l)}$ is obtained as a coarsening of $\nu^{(l-1)}$ by merging the masses of several points into one point.

It seems intuitively reasonable to choose $\nu^{(l)}$ in such a way that $W_1(\nu^{(l)},\nu^{(l-1)})$ is as small as possible, since the latter bounds $\abs{W_1(\mu,\nu^{(l)}) - W_1(\mu,\nu^{(l-1)})}$. This comes down to a capacitated location-allocation problem, which is NP-hard even in the one-dimensional case; see~\cite{SheraliNordai1988}. Out of speed concerns and since we only need a reasonably good starting value for our algorithm, we decided to content ourselves with the same weighted $K$-means clustering algorithm used by \cite{Merigot2011} (referred to as Lloyd's algorithm), which iteratively improves an initial aggregation of the support of $\nu^{(l-1)}$ into $\abs{\supp(\nu^{(l)})}$ clusters towards local optimality with respect to the \emph{squared} Euclidean distance.
The resulting $\nu^{(l)}$ is then the discrete measure with the cluster centers as its support points and as weights the summed up weights of the points of $\nu^{(l-1)}$ contained in each cluster; see Algorithm~3 in~\cite{Hartmann2016}.
The corresponding weighted $K$-median clustering algorithm, based on alternating between assignment of points to clusters and recomputation of cluster centers as the \emph{median} of all weighted points in the cluster, should intuitively give a $\nu^{(l)}$ based on which we obtain a better starting solution. This may sometimes compensate for the much longer time needed for performing $K$-median clustering.

Having created the decomposition $\nu = \nu^{(0)}, \dots, \nu^{(L)}$, we minimize $\Phi$ along the sequence of these coarsened measures, beginning at $\nu^{(L)}$ with the initial weight vector $w^{(L,0)} = 0\in\RR^{\abs{\supp(\nu^{(L)})}}$ and computing the optimal weight vector $w^{(L,*)}$ for the transport from $\mu$ to $\nu^{(L)}$. Every time we pass to a finer measure $\nu^{(l-1)}$ from the coarser measure $\nu^{(l)}$, we generate the initial weight vector $w^{(l-1,0)}$ from the last optimal weight vector $w^{(l,*)}$ by assigning the weight of each support point of $\nu^{(l)}$ to all the support points of $\nu^{(l-1)}$ from whose merging the point of $\nu^{(l)}$ originated; see also Algorithm~2 in~\cite{Hartmann2016}.

\subsection{Numerical computation of $\Phi$ and $\nabla \Phi$}
\label{ssec:computation}

For practical computation we assume here that $\mcx$ is a bounded rectangle in $\RR^2$ and that the density of the measure $\mu$ is of the form
\begin{equation*}
  \varrho(x) = \sum_{i \in I} a_{i} \one_{Q_{i}}(x)
\end{equation*}
for $x \in \mcx$, where we assume that $I$ is a finite index set and $(Q_i)_{i \in I}$ is a partition of the domain $\mcx$ into (small) squares, called \emph{pixels}, of equal side length. This is natural if $\varrho$ is given as a grayscale image and we would then typically index the pixels $Q_i$ by their centers $i \in I \subset \ZZ^2$. It may also serve as an approximation for arbitrary $\varrho$. It is however easy enough to adapt the following considerations to more general (not necessarily congruent) tiles and to obtain better approximations if the function $\varrho$ is specified more generally than piecewise constant.  

The optimization procedure requires the non-trivial evaluation of $\Phi$ at a given weight vector $w$. This includes the integration over Voronoi cells and therefore the construction of a weighted Voronoi diagram. The latter task is solved by the package \emph{2D Apollonius Graphs} as part of the \emph{Computational Geometry Algorithms Library} \cite{CGAL2015}. The integrals we need to compute are
\[\int_{\Vor_w(j)} \rho(x) \; \diff x \quad\text{and}\quad \int_{\Vor_w(j)} \lVert x - y_j\rVert \rho(x) \; \diff x.\]
By definition the boundary of a Voronoi cell $\Vor_w(j)$ is made up of hyperbola segments, each between $y_j$ and one of the other support points of $\nu$. The integration could be performed by drawing lines from $y_j$ to the end points of those segments and integrating over the resulting triangle-shaped areas separately. This would be executed by applying an affinely-linear transformation that moves the hyperbola segment onto the hyperbola $y=1/x$ to both the area and the function we want to integrate. The required transformation can be found in \cite[Section 5.6]{Hartmann2016}.

However, we take a somewhat more crude, but also more efficient path here, because it is a quite time-consuming task to decide which pixels intersect which weighted Voronoi cells and then to compute the (areas of the) intersections. We therefore approximate the intersections by splitting the pixels into a quadratic number of subpixels (unless the former are already very small) and assuming that each of them is completely contained in the Voronoi cell in which its center lies. This reduces the problem from computing intersections to determining the corresponding cell for each center, which the data structure used for storing the Voronoi diagram enables us to do in roughly $\mathcal{O}(\log n)$ time; see \cite{KaravelasYvinec2002}. The operation can be performed even more efficiently: When considering a subpixel other than the very first one, we already know the cell that one of the neighboring subpixel's center belongs to. Hence, we can begin our search at this cell, which is either already the cell we are looking for or lies very close to it.

The downside of this approximation is that it can make the L-BFGS algorithm follow search directions along which the value of $\Phi$ cannot be sufficiently decreased even though there exist different directions that allow a decrease. This usually only happens near a minimizing weight vector and can therefore be controlled by choosing a not too strict stopping criterion for a given subpixel resolution, see the next subsection.

\subsection{Our implementation}
\label{ssec:implementation}

Implementing the algorithm described in this section requires two technical choices: The number of subpixels every pixel is being split into and the stopping criterion for the minimization of $\Phi$.
We found that choosing the number of subpixels to be the smallest square number such that their total number is larger than or equal to $1000n$ gives a good compromise between performance and precision.

The stopping criterion is implemented as follows: We terminate the optimization process once $\norm{\nabla \Phi(w)}_1/2 \leq \eps$ for some $\eps > 0$. Due to Theorem \ref{thm:phi}\ref{enum:phi2} this criterion yields an intuitive interpretation: $\norm{\nabla \Phi(w)}_1/2$ is the amount of mass that is being mistransported, i.e.\ the total amount of mass missing or being in surplus at some $\nu$-location~$y_i$ when transporting according to the current tessellation.
In our experience this mass is typically rather proportionally distributed among the different cells and tends to be assigned in a close neighbourhood of the correct cell rather than far away. So even with somewhat larger $\eps$, the computed Wasserstein distance and the overall visual impression of the optimal transport partition remain mostly the same. In the numerical examples in Sections~\ref{sec:performance} and \ref{sec:applications} we choose the value of $\eps = 0.05$.

We implemented the algorithm in C++ and make it available on GitHub\footnote{\url{https://github.com/valentin-hartmann-research/semi-discrete-transport}} under the MIT license. Our implementation uses libLBFGS \cite{libLBFGS2015} for the L-BFGS procedure and the geometry library CGAL \cite{CGAL2015} for the construction and querying of weighted Voronoi tessellations. The repository also contains a Matlab script to visualize such tessellations. Our implementation will also be included in a future version of the \texttt{transport}-package \cite{transport} for the statistical computing environment {\sf R} \cite{R}.

\section{Performance evaluation}
\label{sec:performance}

We evaluated the performance of our algorithm by randomly generating measures \(\mu\) and \(\nu\) with varying features and computing the optimal transport partitions between them. The measure \(\mu\) was generated by simulating its density $\varrho$ as a Gaussian random field with Mat{\'e}rn covariance function on the rectangle $[0,1] \times [0,0.75]$, applying a quadratic function and normalizing the result to a probability density. Corresponding images were produced at resolution \(256\times 196\) pixels and were further divided into 25 subpixels each to compute integrals over Voronoi cells. In addition to a variance parameter, which we kept fixed,
the Mat{\'e}rn covariance function has parameters for the scale $\gamma$ of the correlations, which we varied among $0.05$, $0.15$ and $0.5$, and the smoothness $s$ of the generated surface, which we varied between $0.5$ and $2.5$ corresponding to a continuous surface and a $C^2$-surface, respectively. The simulation mechanism is similar to the ones for classes 2--5 in the benchmark DOTmark proposed in~\cite{SchrieberEtAl2017}, but allows to investigate the influence of individual parameters more directly. Figure~\ref{fig:paramcombos} shows one realization for each parameter combination. For the performance evaluation we generated 10 realizations each.
\begin{figure}[ht]
  \centering
  \hspace*{1mm}\includegraphics[width=.32\textwidth]{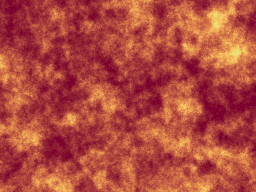}\hfill
  \includegraphics[width=.32\textwidth]{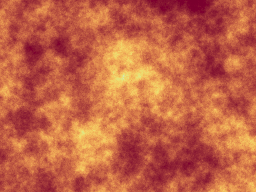}\hfill
  \includegraphics[width=.32\textwidth]{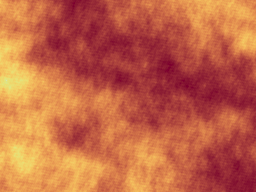}\hspace*{1mm}
  
  \vspace*{1mm}
  
  \hspace*{1mm}\includegraphics[width=.32\textwidth]{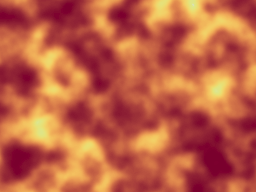}\hfill
  \includegraphics[width=.32\textwidth]{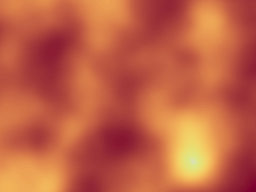}\hfill
  \includegraphics[width=.32\textwidth]{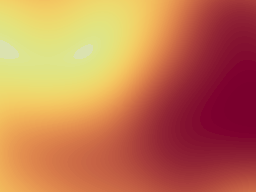}\hspace*{1mm}
\caption{\label{fig:paramcombos}Realizations of the measure $\mu$ for all six parameter combinations in \Secref{sec:performance}. First row: smoothness $s = 0.5$; second row: smoothness $s = 2.5$. The correlation scale $\gamma$ is 0.05, 0.15 and 0.5 (from left to right).}
\end{figure}

The measures \(\nu\) have \(n\) support points generated uniformly at random on $[0,1] \times [0,0.75]$, where we used \(n=250\) and \(n=1000\). We then assigned either mass $1$ or mass $\varrho(x)$ to each point $x$ and normalized to obtain probability measures. We generated 20 independent \(\nu\)-measures of the first kind (unit mass) and computed the optimal transport from each of the $10 \times 6$ \(\mu\)-measures for each of the 6 parameter combinations. We further generated for each of the $10 \times 6$ \(\mu\)-measures $20$ corresponding $\nu$-measure of the second kind (masses from~$\mu$) and computed again the corresponding optimal transports. The stopping criterion for the optimization was an amount of \(\leq 0.05\) of mistransported mass.
\begin{figure}[ht]
\centering
\subfloat[\(n=250,\ s=0.5\)]{\label{fig:runtime_0.5_250}\includegraphics[width=.475\textwidth]{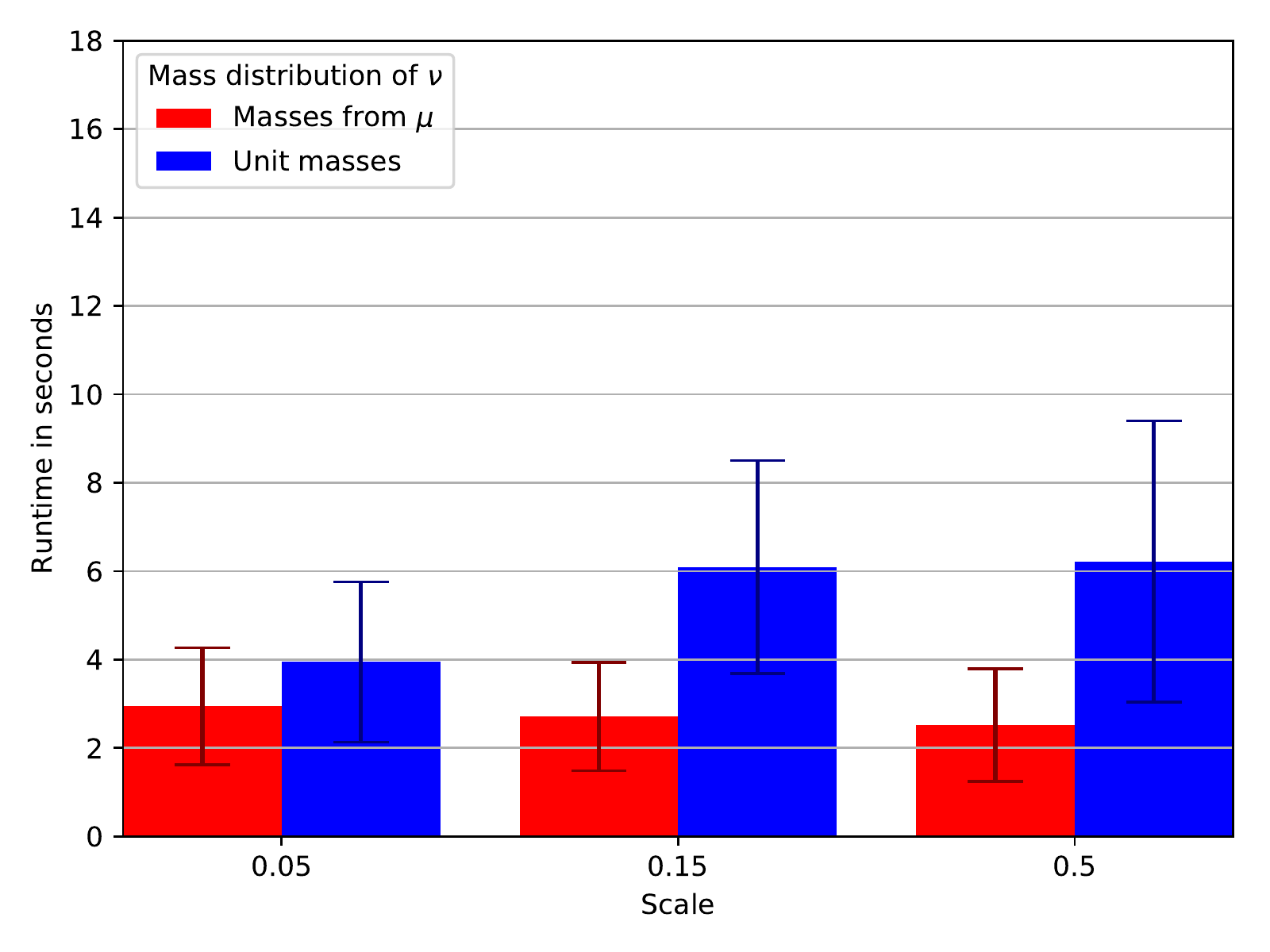}}\hfill
\subfloat[\(n=250,\ s=2.5\)]{\label{fig:runtime_2.5_250}\includegraphics[width=.475\textwidth]{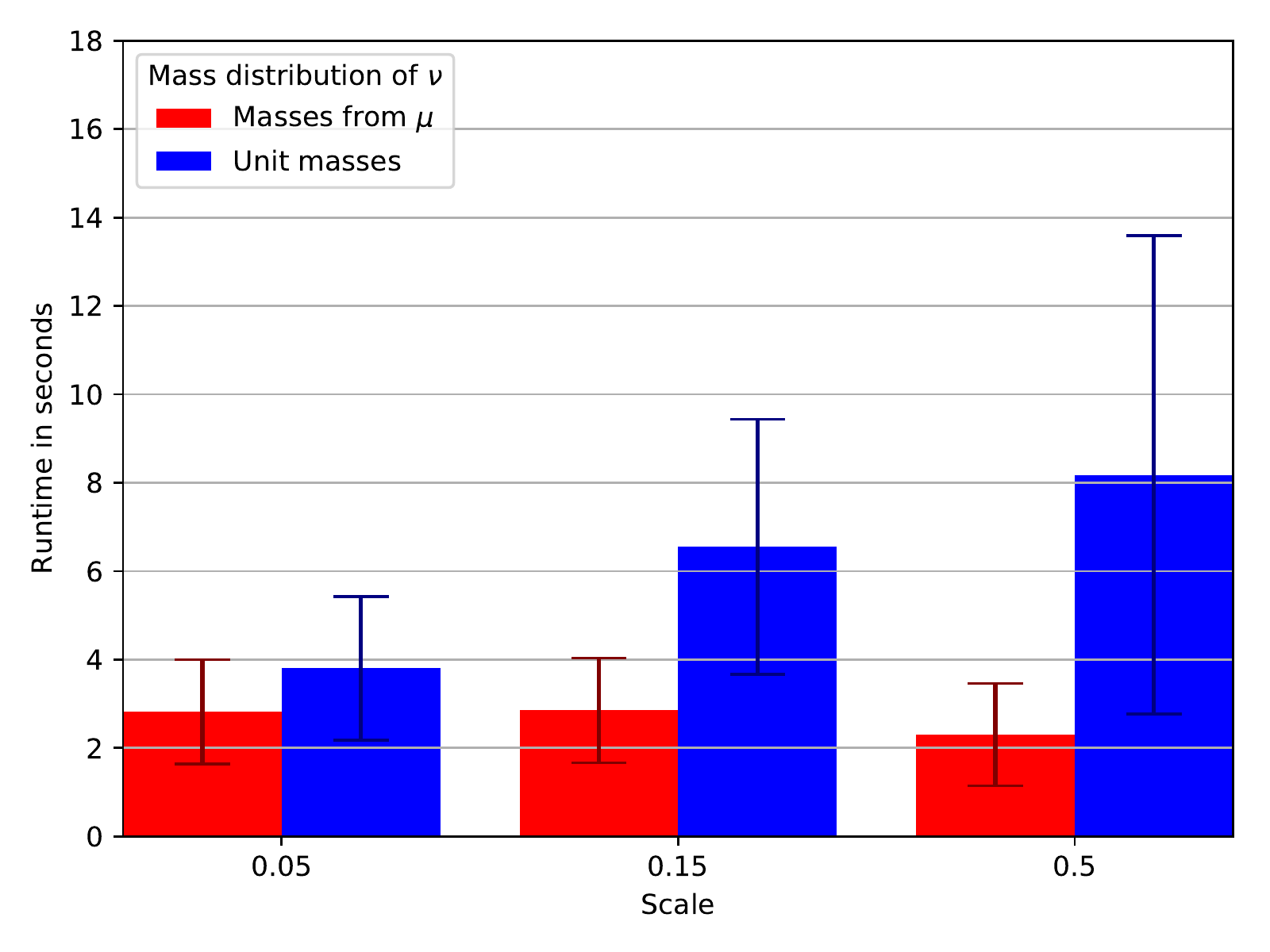}}\
\subfloat[\(n=1000,\ s=0.5\)]{\label{fig:runtime_0.5_1000}\includegraphics[width=.475\textwidth]{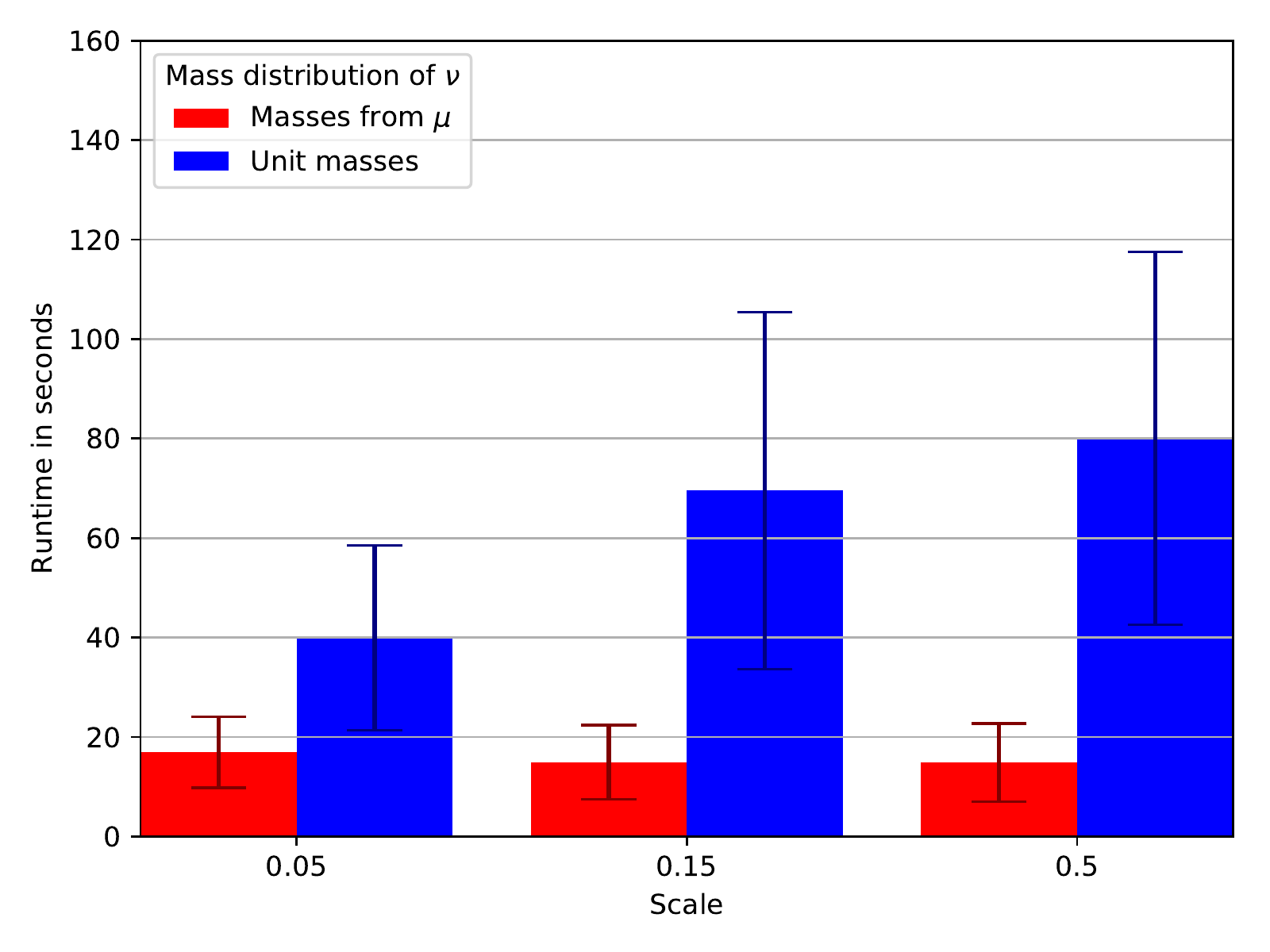}}\hfill
\subfloat[\(n=1000,\ s=2.5\)]{\label{fig:runtime_2.5_1000}\includegraphics[width=.475\textwidth]{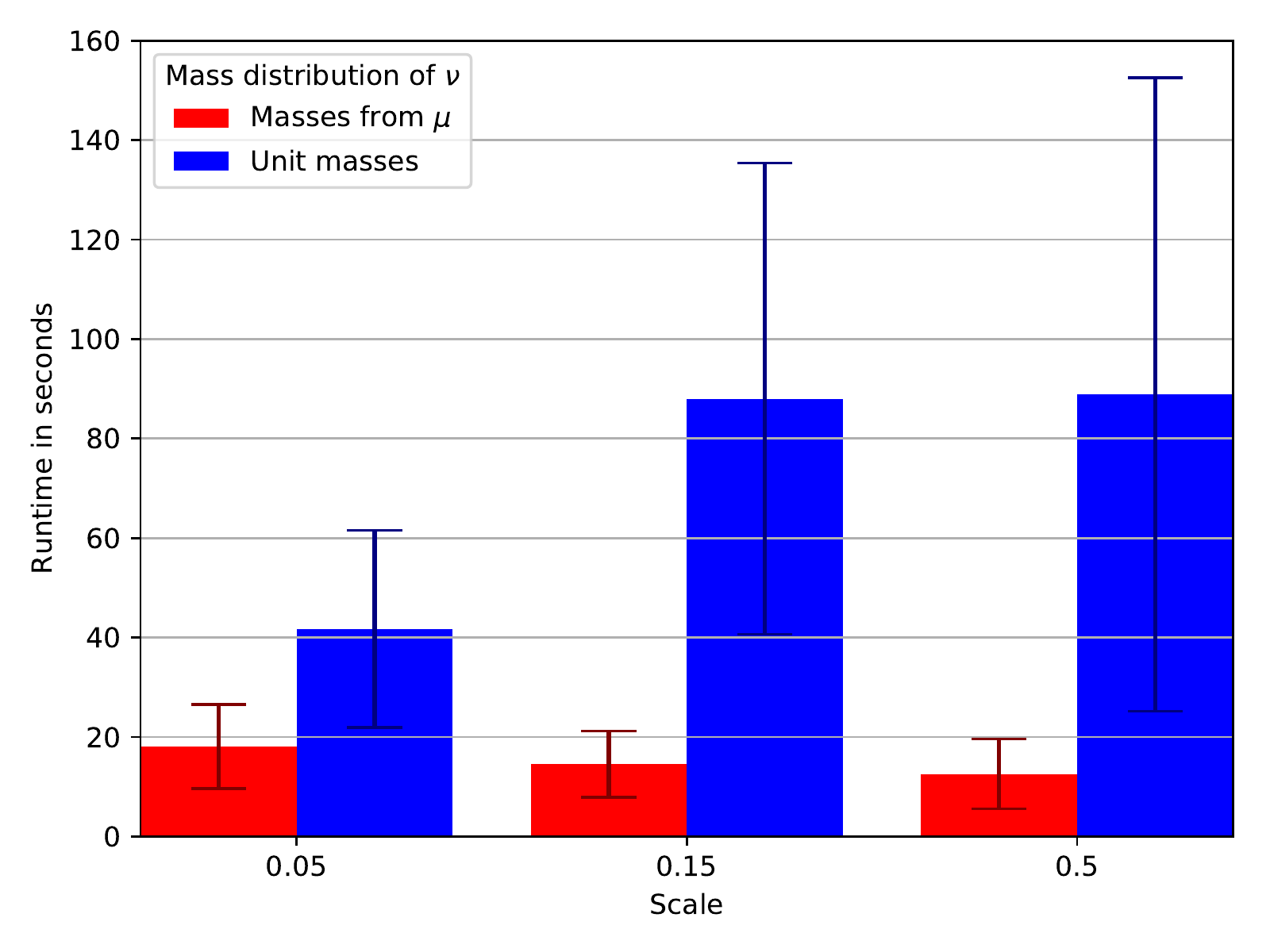}}\
\caption{\label{fig:runtime}Runtimes of the experiments of \Secref{sec:performance}. Bars and lines indicate means and standard deviations over 200 experiments, combining 10 realizations of $\mu$ with 20 realizations of $\nu$. The measures \(\mu\) are based on Gaussian random fields with Mat\'ern covariance function; see \Figref{fig:paramcombos}. The measures \(\nu\) are based on support points picked uniformly at random with unit masses (blue) or masses picked from the corresponding \(\mu\) (red). Rows: \(\nu\) with \(n=250\) versus \(n=1000\) support points. Columns: smoothness parameter \(s=0.5\) versus \(s=2.5\). Note the different scaling.}
\end{figure}

The results for $n=250$ support points of $\nu$ are shown in Figures~\ref{fig:runtime}(a) and \ref{fig:runtime}(b), those for $n=1000$ support points in Figures~\ref{fig:runtime}(c) and \ref{fig:runtime}(d). Each bar shows the mean of the runtimes on one core of a mobile Intel Core i7 across the 200 experiments for the respective parameter combination; the blue bars are for the \(\nu\) measures with uniform masses, the red bars for the measures with masses selected from the corresponding \(\mu\) measure. The lines indicate the standard deviations.

We observe that computation times stay more or less the same between parameter choices (with some sampling variation) if the $\nu$-masses are taken from the corresponding $\mu$-measure. In this case mass can typically be assigned (very) locally, and slightly more so if $\rho$ has fewer local fluctuations (higher $\gamma$ and/or $s$).
This seems a plausible explanation for the relatively small computation times. 

In contrast, if all $\nu$-masses are the same, the computation times are considerably higher and increase substantially with increasing $\gamma$ and somewhat with increasing smoothness. This seems consistent with the hypothesis that the more the optimal transport problem can be solved by assigning mass locally the lower the computation times. For larger scales many of the support points of $\nu$ compete strongly for the assignment of mass and a solution can only be found globally. A lower smoothness may alleviate the problem somewhat, because it creates locally more variation in the available mass. 

We would like to note that to the best of our knowledge the present implementation is the first one for computing the optimal transport in the semi\hyp discrete setting for the case \(p=1\), which means that fair performance comparisons with other algorithms are not easily possible.

\section{Applications}
\label{sec:applications}

We investigate three concrete problem settings in order to better understand the workings and performance of our algorithm as well as to illustrate various theoretical and practical aspects pointed out in the paper.

\subsection{Optimal transport between two normal distributions}
\label{ssec:twonormals}

We consider two bivariate normal distributions $\mu = \mathrm{MVN}_2(a, \sigma^2 \mathrm{I}_2)$ and $\nu = \mathrm{MVN}_2(b, \sigma^2 \mathrm{I}_2)$, where $a = 0.8 \cdot \one$, $b = 2.2 \cdot \one$ and $\sigma^2 = 0.1$, i.e.\ they both have the same spherical covariance matrix such that one distribution is just a displacement of the other. For computations we have truncated both measures to the set $\mcx = [0,3]^2$. By discretization (quantization) a measure $\tnu$ is obtained from $\nu$. We then compute the optimal transport partition and the Wasserstein distances between $\mu$ and $\tnu$ for both $p=1$ and $p=2$. Computations and plots for $p=2$ are obtained with the package \texttt{transport} \cite{transport} for the statistical computing environment {\sf R} \cite{R}. For $p=1$ we use our implementation presented in the previous section.

Note that for the original problem of optimal transport from $\mu$ to $\nu$ the solution is known exactly, so we can use this example to investigate the correct working of our implementation. 
In fact, for any probability measure $\mu'$ on $\RR^d$ and its displacement $\nu' = T_{\#} \mu'$, where $T \colon \RR^2 \to \RR^2, x \mapsto x + (b-a)$ for some vector $b-a \in \RR^d$, it is immediately clear that the translation $T$ induces an optimal transport plan for \eqref{eq:mk} and that $W_p(\mu',\nu') = \norm{b-a}$ for arbitrary $p \geq 1$. This holds because we obtain by Jensen's inequality $(\EE \norm{X-Y}^p)^{1/p} \geq \norm{\EE(X-Y)} = \norm{b-a}$ for $X \sim \mu'$, $Y \sim \nu'$; therefore $W_p(\mu',\nu') \geq \norm{b-a}$ and $T$ is clearly a transport map from $\mu'$ to $\nu'$ that achieves this lower bound. For $p=2$ Theorem~9.4 in \cite{Villani2009} yields that $T$ is the unique optimal transport map and the induced plan $\pi_T$ is the unique optimal transport plan. In the case $p=1$ neither of these objects is unique due to the possibility to rearrange mass transported within the same line segment at no extra cost. 

Discretization was performed by applying the weighted $K$-means algorithm based on the discretization of $\mu$ to a fine grid and an initial configuration of cluster centers drawn independently from distribution $\nu$ and equipped with the corresponding density values of~$\nu$ as weights. The number of cluster centers was kept to $n=300$ for better visibility in the plots below. We write $\tnu = \sum_{i=1}^n \delta_{y_i}$ for the discretized measure. The discretization error can be computed numerically by solving another semi-discrete transport problem, see the third column of Table~\ref{tab:compdist} below. 

The first column of Figure~\ref{fig:transports} depicts the measures $\mu$ and $\tnu$ and the resulting optimal transport partitions for $p=1$ and $p=2$. In the case $p=1$ the nuclei of the weighted Voronoi tessellation are always contained in their cells, whereas for $p=2$ this need not be the case. We therefore indicate the relation by a gray arrow pointing from the centroid of the cell to its nucleus whenever the nucleus is outside the cell. The theory for the case $p=2$, see e.g.\ \cite[Section~2]{Merigot2011}, identifies the tessellation as a Laguerre tessellation (or power diagram), which consists of convex polygons.

The partitions obtained for $p=1$ and $p=2$ look very different, but they both capture optimal transports along the direction $b-a$ very closely. For $p=2$ we clearly see a close approximation of the optimal transport map $T$ introduced above. For $p=1$ we see an approximation of an optimal transport plan $\pi$ that collects the mass for any $y \in \mcy$ somewhere along the way in the direction $b-a$.

The second column of Table~\ref{tab:compdist} gives the Wasserstein distances computed numerically based on these partitions. Both of them are very close to the theoretical value of $\norm{b-a} = \sqrt{2} \cdot 1.4 \approx 1.979899$, and in particular they are well inside the boundaries set by the approximation error.

\begin{table}[ht!]
\centering
\begin{tabular}{|c|lll|lll|}
\hline
& \multicolumn{3}{c|}{\raisebox{0pt}[9pt][0pt]{$\mathrm{MVN}$ vs.~$\mathrm{MVN}$}} & \multicolumn{3}{c|}{$\mathrm{MVN}+\Leb$ vs.~$\mathrm{MVN} + \Leb$} \\[1.5pt]
 & theoretical & computed & discr.\ error & theoretical & computed & discr.\ error \\[1pt]
\hline
p=1 & \raisebox{0pt}[9pt][0pt]{1.979899} & 1.965988 & 0.030962 & 1.979899 & 2.164697 & 0.653370 \\[1.5pt]
p=2 & 1.979899 & 1.965753 & 0.034454 & unknown & 0.827809 & 0.220176 \\
\hline
\end{tabular}
\caption{\label{tab:compdist} Theoretical continuous and computed semidiscrete Wasserstein distances, together with the discretization error.} 
\end{table}

We also investigate the effect of adding a common measure to both $\mu$ and $\nu$: Let $\alpha = \Leb^d\vert_{\mcx}$ and proceed in the same way as above for the measures $\mu' = \mu+\alpha$ and $\nu'=\nu+\alpha$, calling the discretized measure $\tnu'$.
Note that the discretization error (sixth column of Table~\ref{tab:compdist}) is considerably higher, on the one hand due to the fact that the $n=300$ support points of $\tnu'$ have to be spread far wider, on the other hand because the total mass of each measure is $10$ now compared to $1$ before.

\begin{figure}[hp]
\centering
\subfloat[Measures $\mu$ and $\tnu$]{\label{fig:basic}\includegraphics[width=.4\textwidth]{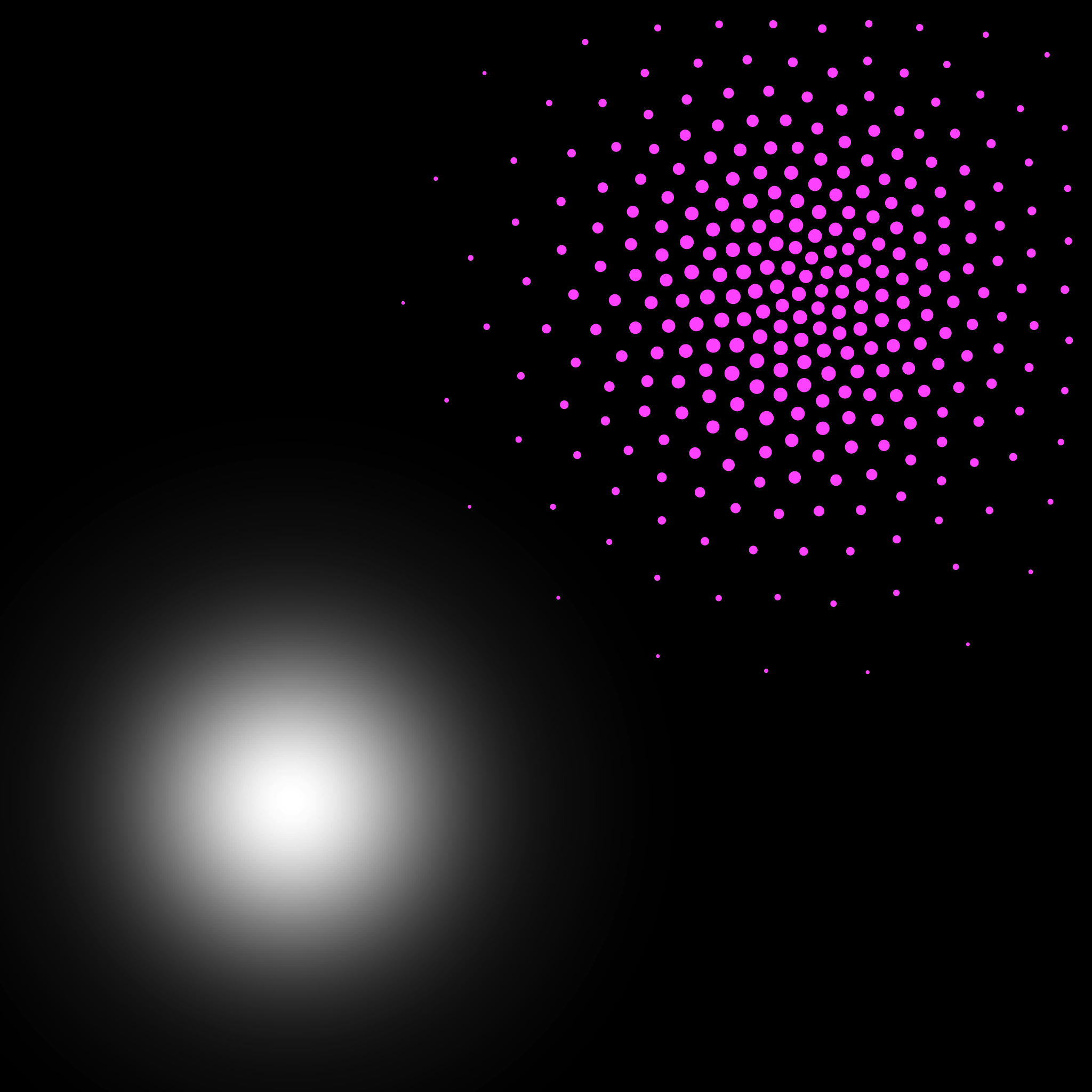}}\hspace*{0.08\textwidth}
\subfloat[Measures $\mu'$ and $\tnu'$]{\label{fig:basic_plus}\includegraphics[width=.4\textwidth]{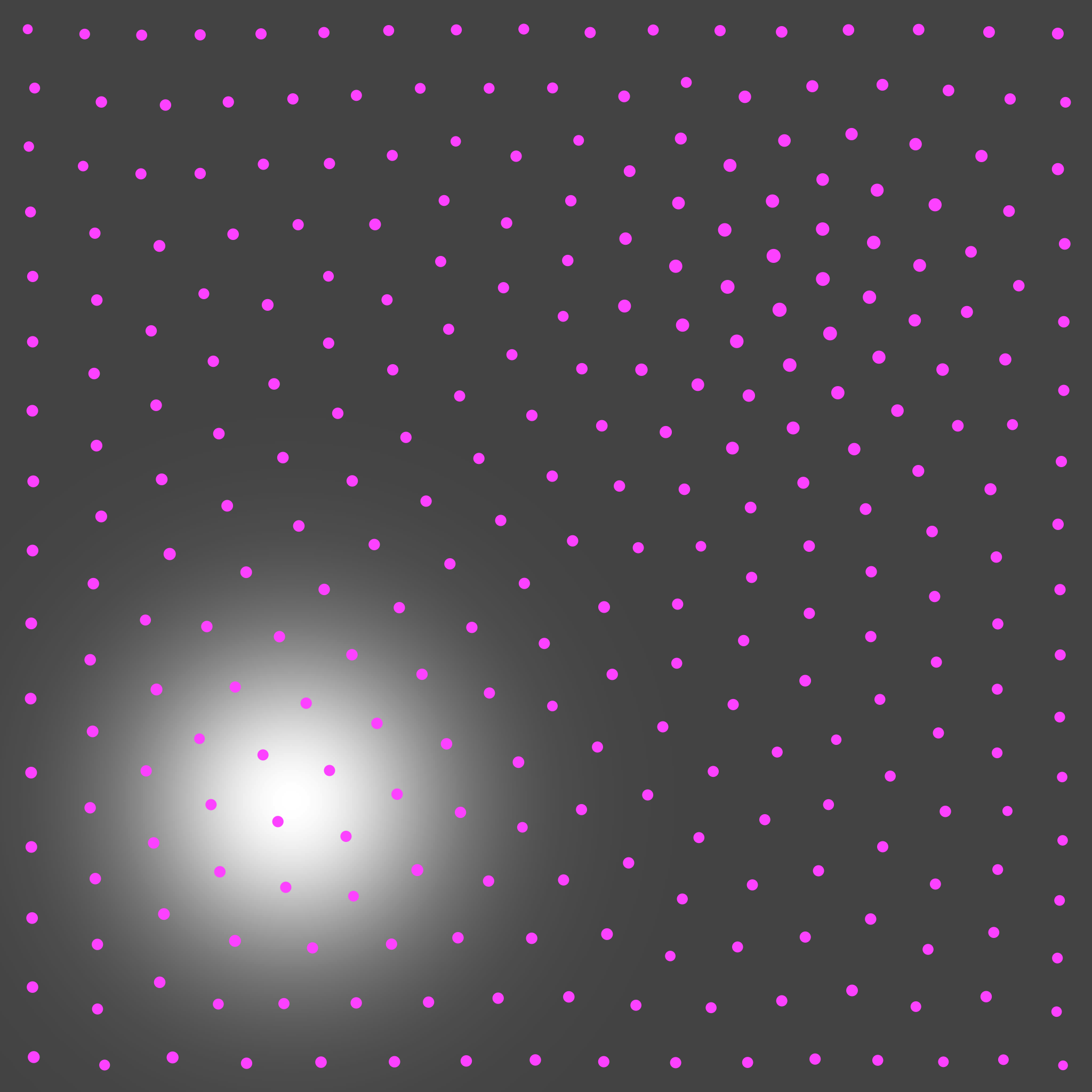}}\
\subfloat[$p=1$]{\label{fig:p1_005}\includegraphics[width=.4\textwidth]{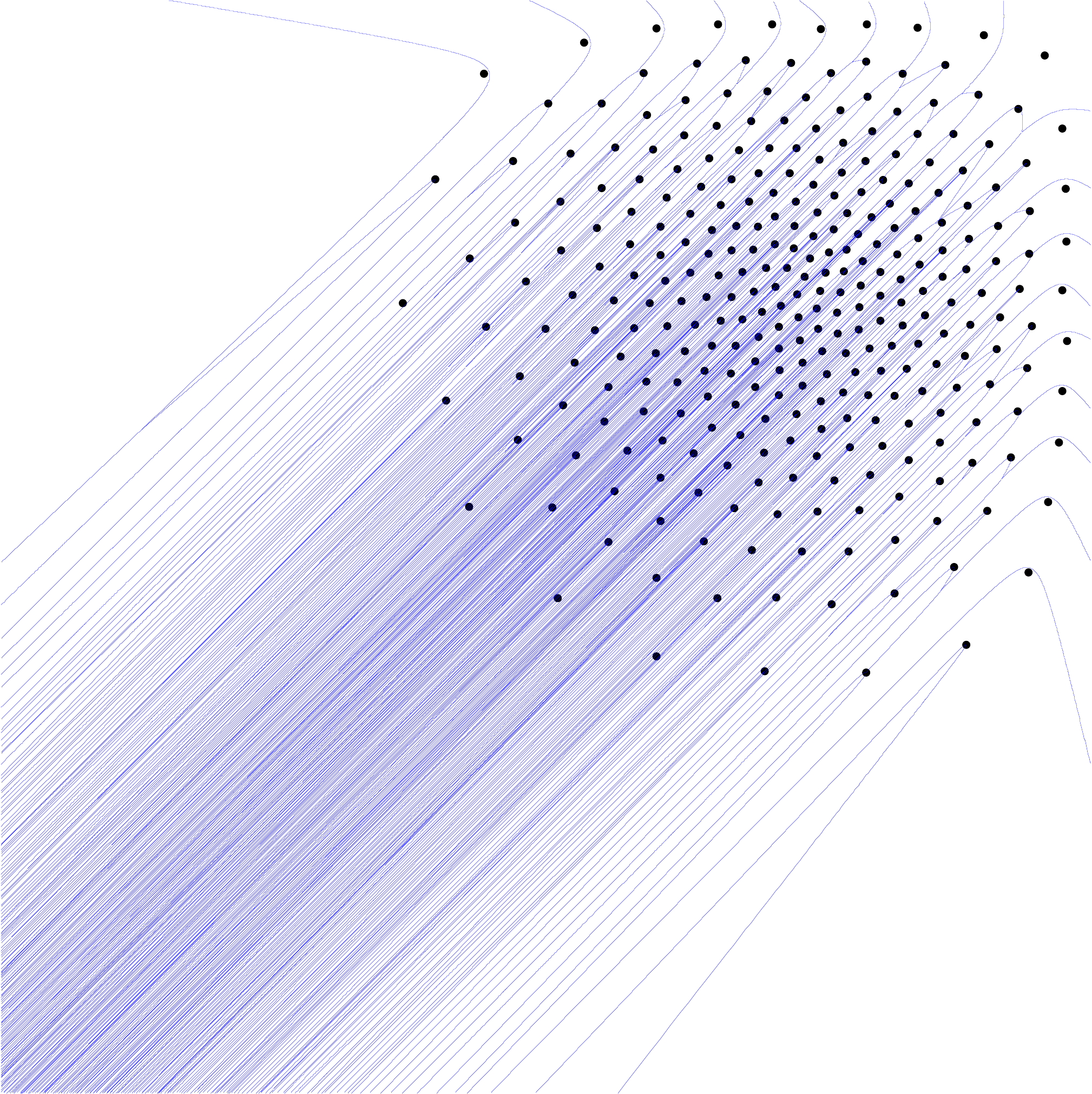}}\hspace*{0.08\textwidth}
\subfloat[$p=1$]{\label{fig:p1_plus_005}\includegraphics[width=.4\textwidth]{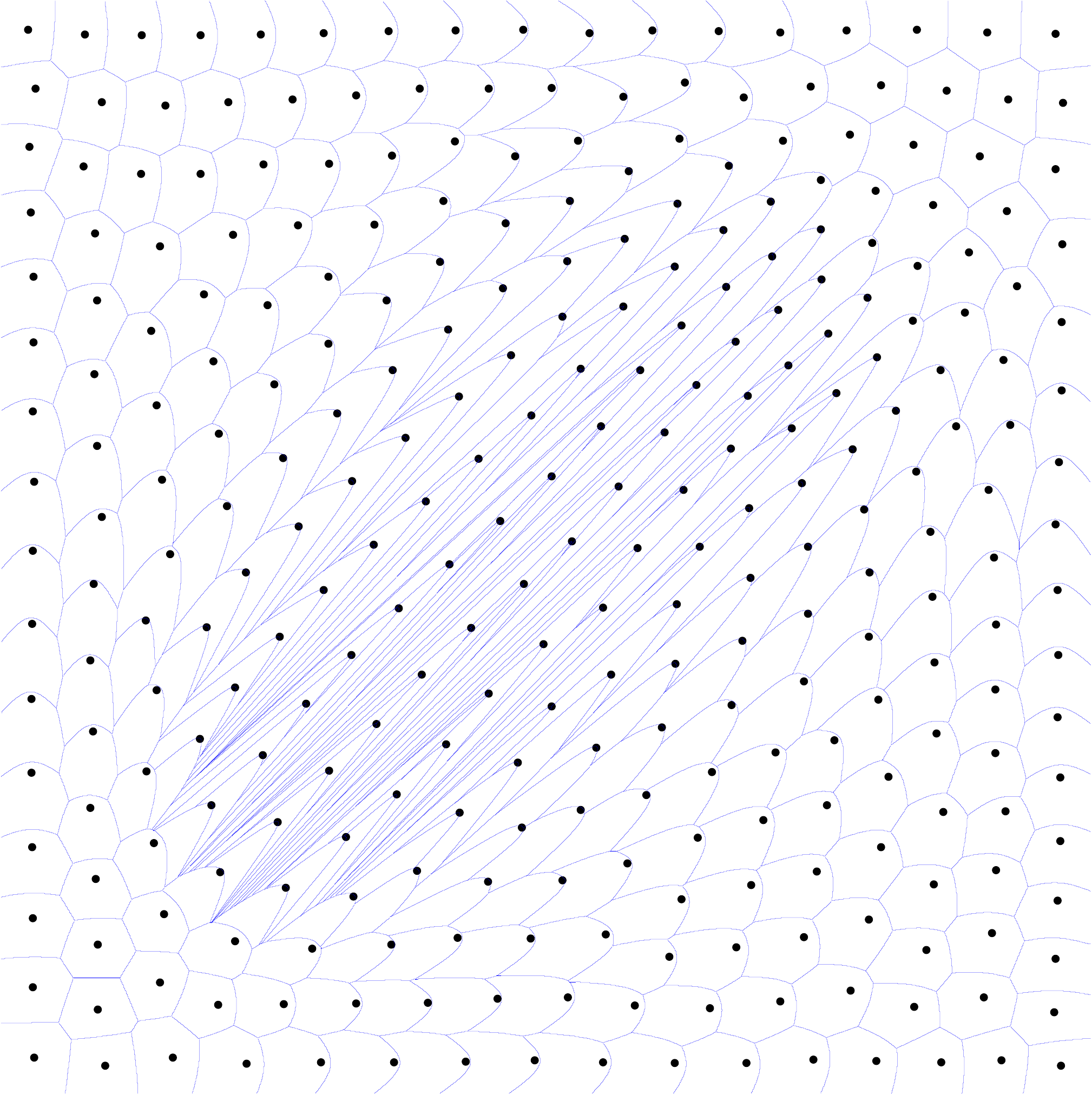}}\
\subfloat[$p=2$]{\label{fig:p2}\includegraphics[width=.4\textwidth]{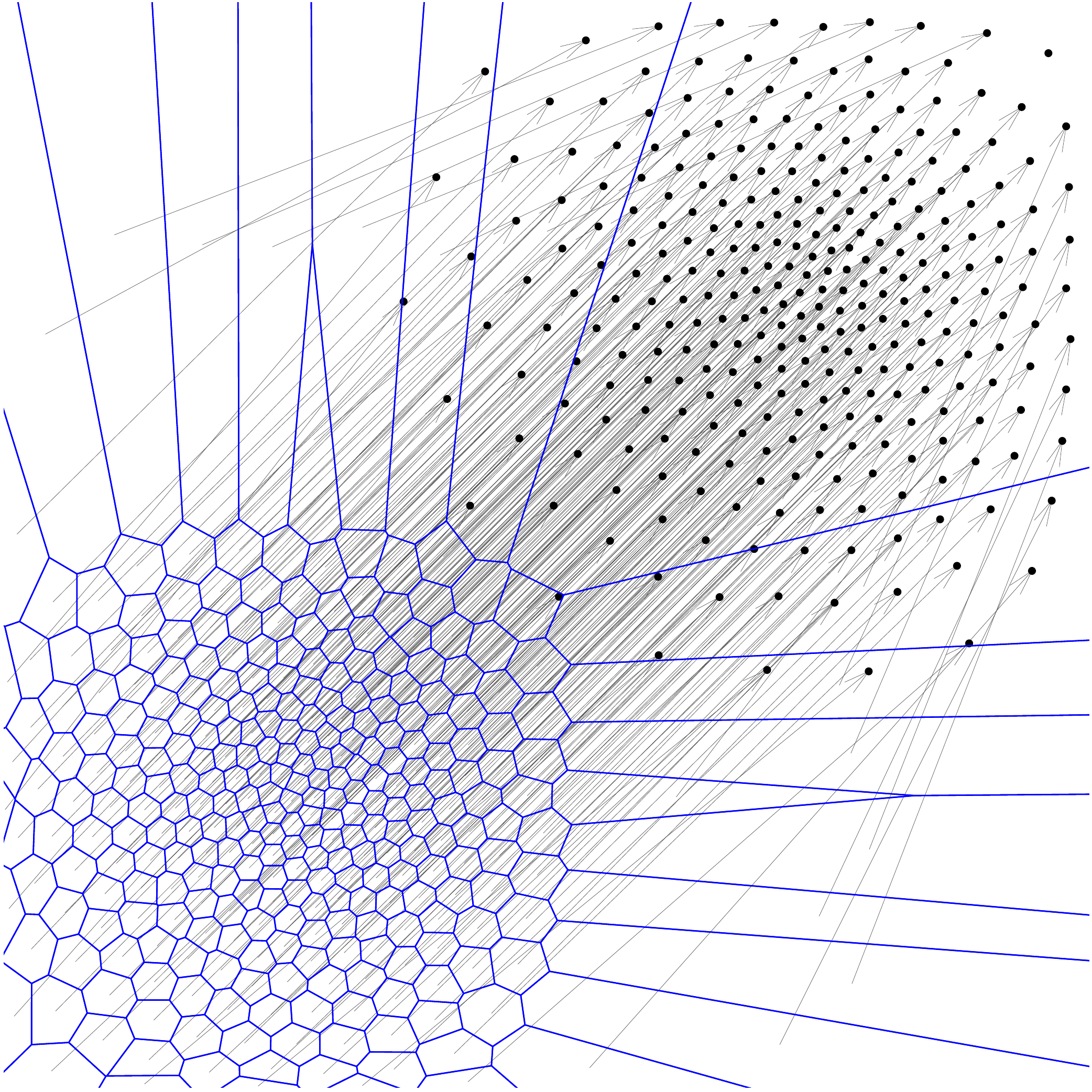}}\hspace*{0.08\textwidth}
\subfloat[$p=2$]{\label{fig:p2_plus}\includegraphics[width=.4\textwidth]{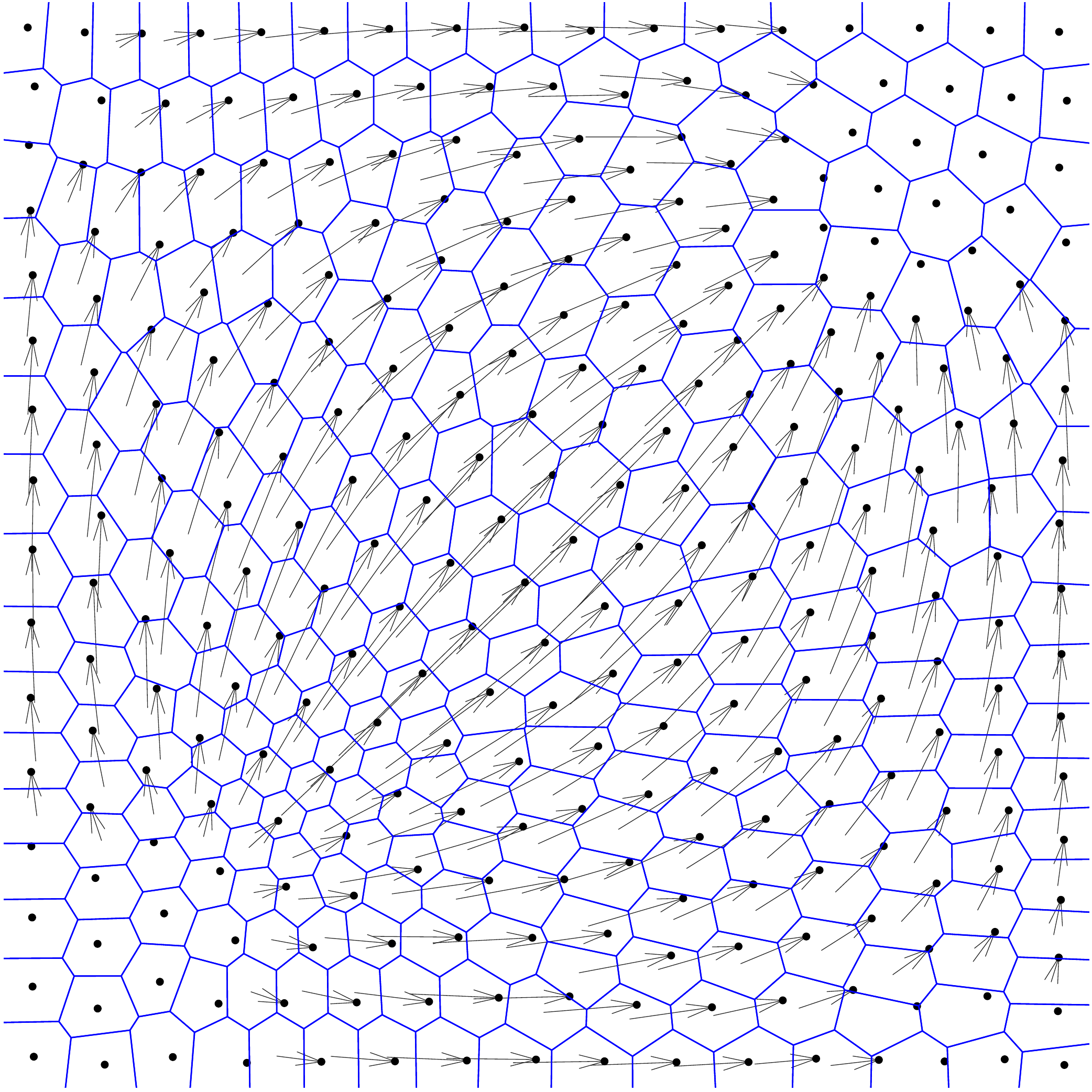}}\
\caption{\label{fig:transports} Left column: Semi-discrete transport between a bivariate normal distribution $\mu$ and a discretized bivariate normal distribution $\tnu$. Right column: Same with Lebesgue measures added to both distributions (before discretization). Panels (a) and (b) illustrate the measures. The densities of the continuous measures $\mu$ and $\mu'$ are displayed as gray level images, the point masses of the discrete measures $\nu$ and $\nu'$ are shown as small discs with areas proportional to the masses placed there. Panels (c) to (f) show the optimal transport partitions.}
\end{figure}

The second column of Figure~\ref{fig:transports} depicts the measures $\mu'$ and $\tnu'$ and the resulting optimal transport partitions for $p=1$ and $p=2$. Both partitions look very different from their counterparts when no $\alpha$ is added. However the partition for $p=1$ clearly approximates a transport plan along the direction of $b-a$ again. Note that the movement of mass is much more local now, meaning the approximated optimal transport plan is not just obtained by keeping measure $\alpha$ in place and moving the remaining measure $\mu$ according to the optimal transport plan $\pi$ approximated in Figure~\ref{fig:transports}(c), but a substantial amount of mass available from $\alpha$ is moved as well. Furthermore, Figure~\ref{fig:transports}(d) gives the impression of a slightly curved movement of mass. We attribute this to a combination of a boundary effect from trimming the Lebesgue measure to $\mcx$ and numerical error based on the coarse discretization and a small amount of mistransported mass.

The computed $W_1$-value for this new example (last column of Table~\ref{tab:compdist}) lies in the vicinity of the theoretical value again if one allows for the rather large discretization error. 

The case $p=2$ exhibits the distinctive curved behavior that goes with the fluid mechanics interpretation discussed in Subsection~\ref{ssec:whyp1}, see also Figure~\ref{fig:circle}. Various of the other points mentioned in Subsection~\ref{ssec:whyp1} can be observed as well, e.g.\ the numerically computed Wasserstein distance 
is much smaller than for $p=1$, which illustrates the lack of invariance and seems plausible in view of the example in Remark~\ref{rem:W2tozero} in the appendix.

\subsection{A practical resource allocation problem}
\label{ssec:realloc}

We revisit the delivery problem mentioned in the introduction. A fast-food delivery service has 32 branches throughout a city area, depicted by the black dots on the map in Figure~\ref{fig:delivery}. For simplicity of representation we assume that most branches have the same fixed production capacity and a few individual ones (marked by an extra circle around the dot) have twice that capacity. We assume further that the expected orders at peak times have a spatial distribution as indicated by the heatmap (where yellow to white means higher number of orders) and a total volume that matches the total capacity of the branches. 
The task of the fast-food chain is to partition the map into 32 delivery zones, matching expected orders in each zone with the capacity of the branches, 
in such a way that the expected cost in form of the travel distance between branch and customer is minimal.
We assume here the Euclidean distance, either because of a street layout that comes close to it, see e.g.~\cite{BoscoeEtAl2012}, or because the deliveries are performed by drones.
The desired partition, computed by our implementation described in Subsection~\ref{ssec:implementation}, is also displayed in Figure~\ref{fig:delivery}. A number of elongated cells in the western and central parts of the city area suggest that future expansions of the fast-food chain should concentrate on the city center in the north.

\begin{figure}[ht!]
	\centering
	\includegraphics[width=0.8\textwidth]{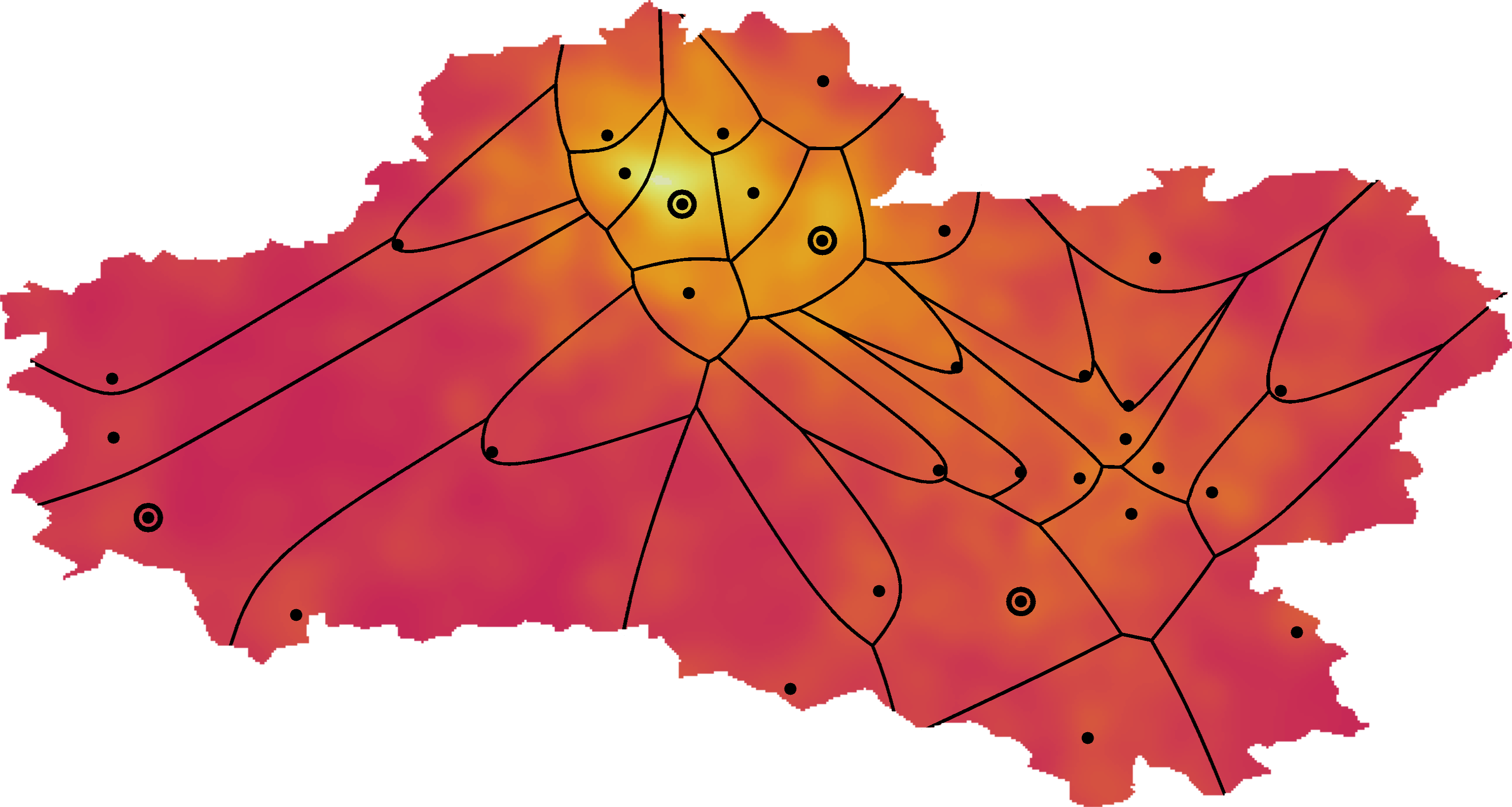}
	\caption{The optimal partition of the city area for the delivery example.}
	\label{fig:delivery}
\end{figure}

\subsection{A visual tool for detecting deviations from a density map}
\label{ssec:vistool}

Very recently, asymptotic theory has been developed that allows, among other things, to test based on the Wasserstein metric $W_p$ whether a sample in $\RR^d$ comes from a given multivariate probability distribution $Q$.
More precisely, assuming independent and identically distributed random vectors $X_1,\ldots,X_n$ with distribution $P$, limiting distributions have been derived for suitable standardizations of $W_p(\frac{1}{n} \sum_{i=1}^n \delta_{X_i}, Q)$ both if $P=Q$ and if $P \neq Q$. Based on an observed value $W_p(\frac{1}{n} \sum_{i=1}^n \delta_{x_i}, Q)$, where $x_1,\ldots,x_n \in \RR^d$, these distributions allow to assign a level of statistical certainty (p-value) to statements of $P=Q$ and $P \neq Q$, respectively.
See~\cite{SommerfeldMunk2018}, which uses general $p \geq 1$, but requires discrete distributions $P$ and $Q$; and~\cite{delBarrioLoubes2018}, which is constraint to $p=2$, but allows for quite general distributions ($P$ and $Q$ not both discrete).

We propose here the optimal transport partition between an absolutely continuous $Q$ and $\frac{1}{n} \sum_{i=1}^n \delta_{x_i}$ as a simple but useful tool for assessing the hypothesis $P=Q$. We refer to this tool as \emph{goodness-of-fit (GOF) partition}. If $d=2$ relevant information may be gained from a simple plot of this partition in a similar way as residual plots are used for assessing the fit of linear models. As a general rule of thumb the partition is consistent with the hypothesis $P=Q$ if it consists of many ``round'' cells that contain their respective $P$-points roughly in their middle. The size of cells may vary according to local densities and there are bound to be some elongated cells due to sampling error (i.e.\ the fact that we can only sample from $P$ and do not know it exactly), but a local accumulation of many elongated cells should give rise to the suspicion that $P=Q$ may be violated in a specific way. Thus GOF partitions provide the data scientist both with a global impression for the plausibility of $P=Q$ and with detailed local information about the nature of potential deviations of $P$ from~$Q$. Of course they are a purely explorative tool and do not give any quantitative guarantees.

We give here an example for illustration. Suppose we have data as given in the left panel of Figure~\ref{fig:vistool_data} and a distribution $Q$ as represented by the heat map in the right panel. Figure~\ref{fig:vistool_partition} shows the optimal transport partition for this situation on the left hand side. The partition indicates that the global fit of the data is quite good. However it also points out some deviations that might be spurious, but might also well be worth further investigation: One is the absence of points close to the two highest peaks in the density map, another one that there are some points too many in the cluster on the very left of the plot. Both of them are quite clearly visible as accumulations of elongated cells.

As an example of a globally bad fit we show in the right panel of Figure~\ref{fig:vistool_partition} the GOF partition when taking as $Q$ the uniform measure on the square. 
\begin{figure}[ht]
\centering
\includegraphics[width=.44\textwidth]{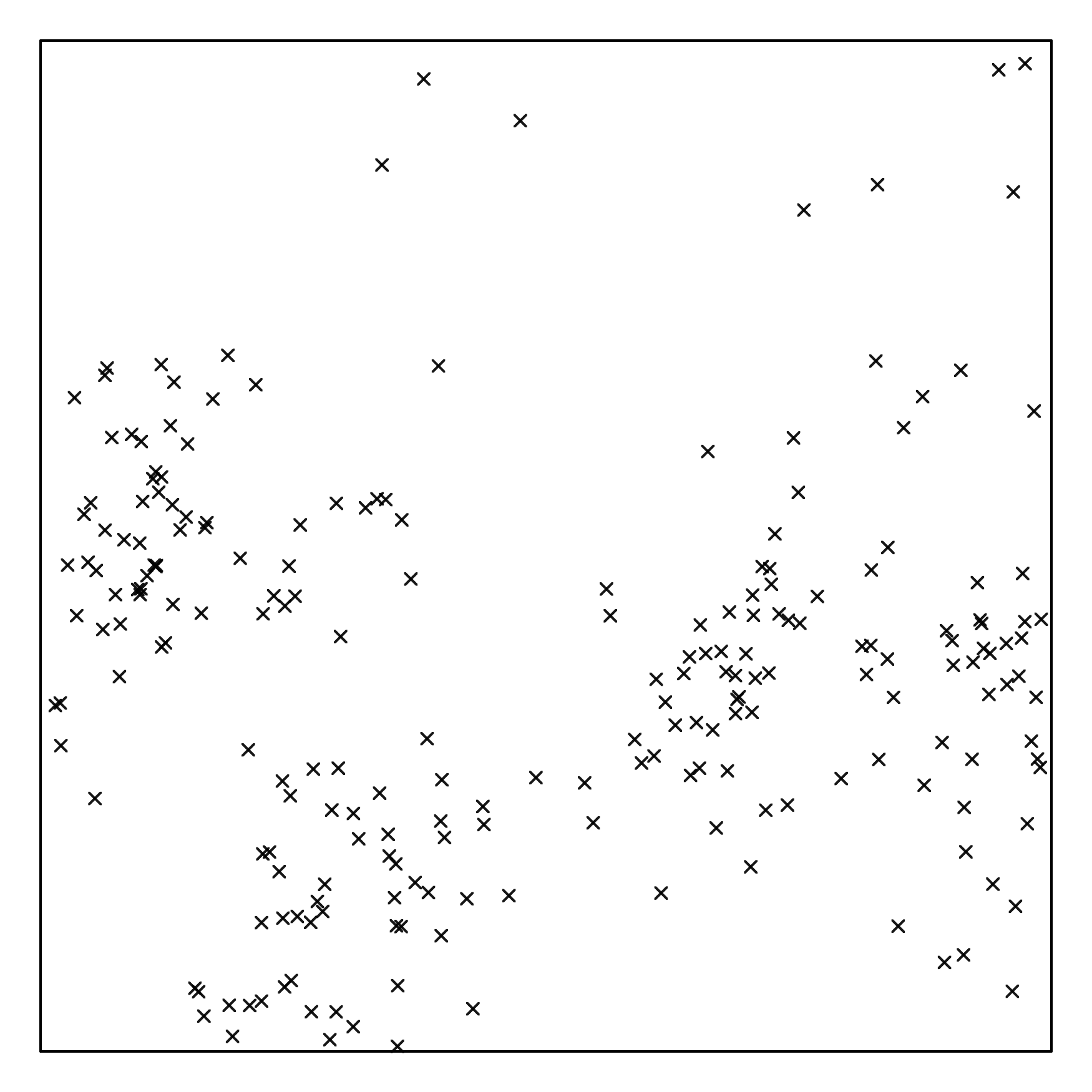}\hspace*{3em}
\raisebox{1.5ex}{\includegraphics[width=.408\textwidth]{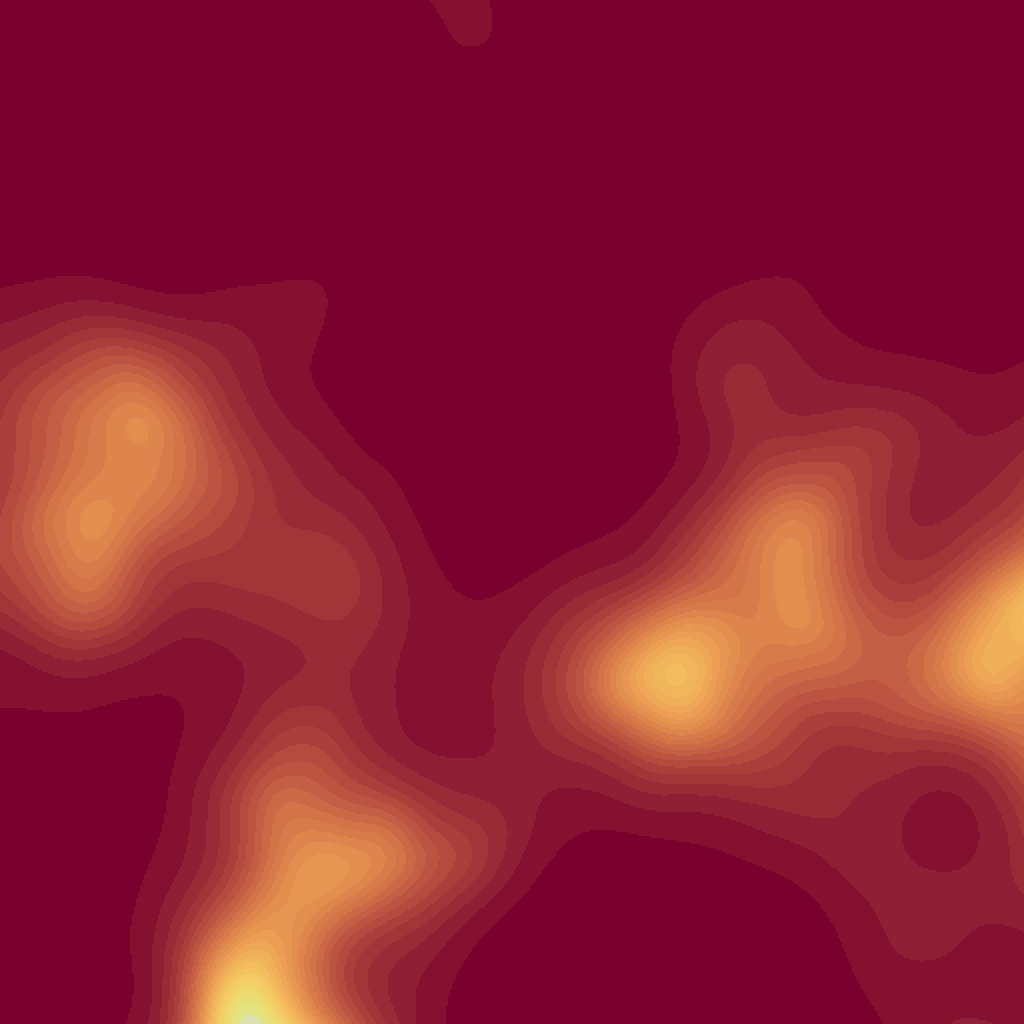}}
\caption{\label{fig:vistool_data} A data example and a continuous density to compare to.}
\end{figure}
\begin{figure}[ht]
\centering
\includegraphics[width=.45\textwidth]{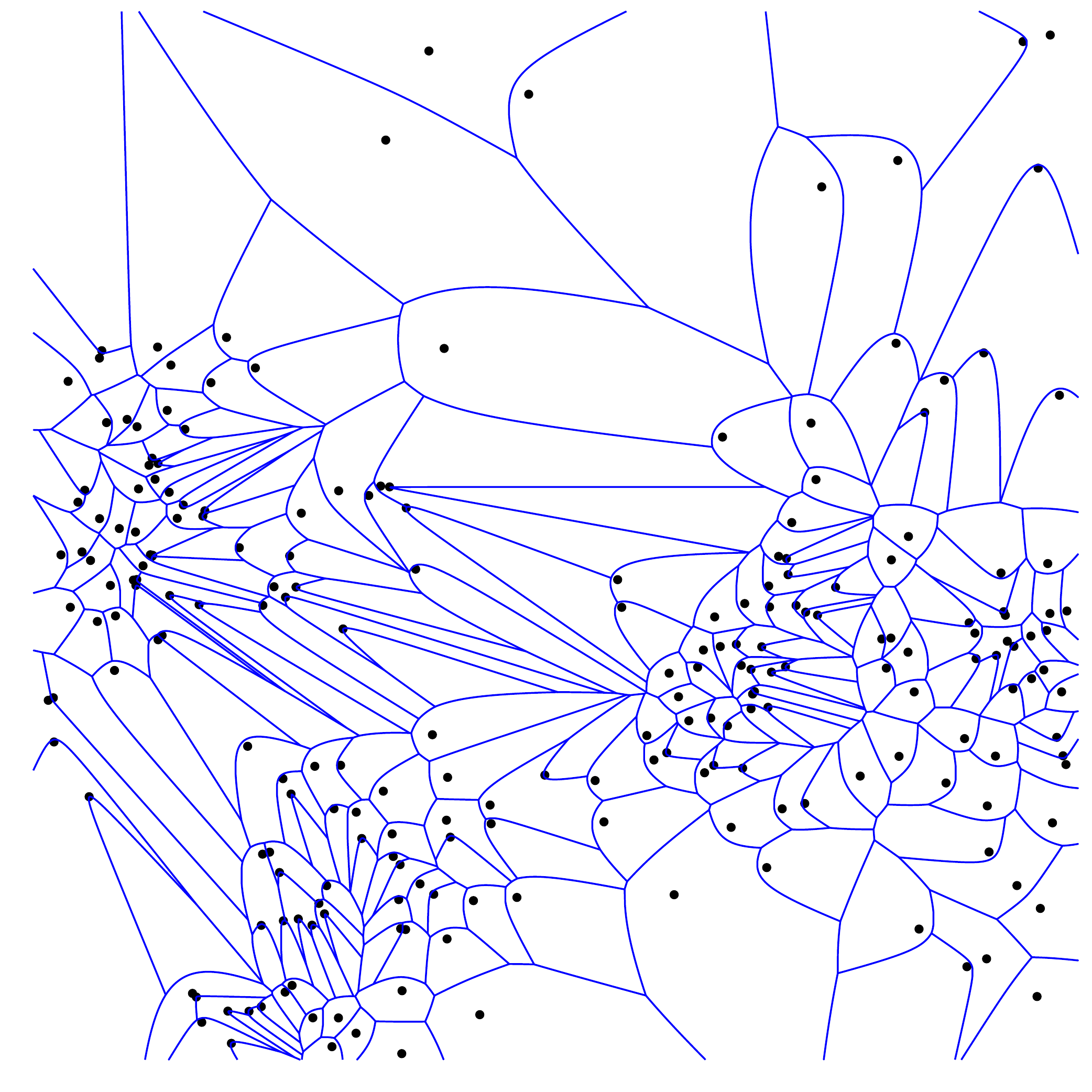}\hspace*{2.5em}
\includegraphics[width=.45\textwidth]{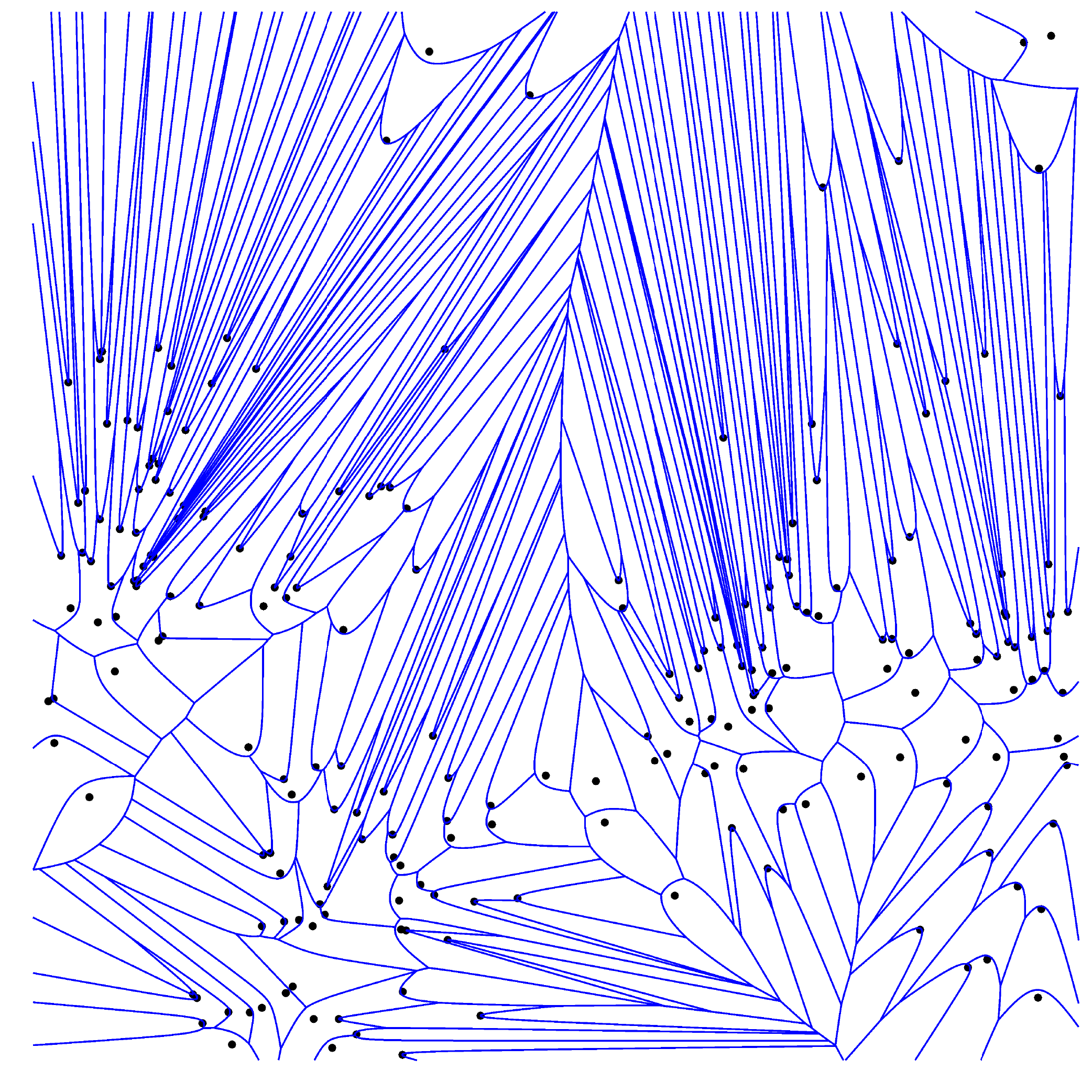}
\caption{\label{fig:vistool_partition} Goodness-of-fit partitions for the data points in the left panel of Figure~\ref{fig:vistool_data} compared with the density in the right panel of Figure~\ref{fig:vistool_data} (left) and compared with the uniform density on the square (right).}
\end{figure}

For larger $d$ direct visual inspection becomes impossible. However, a substantial amount of information may still be extracted, either by considering statistics of the GOF partition in $d$ dimensions that are able to detect local regions of common orientation and high eccentricity of cells, or by applying dimension reduction methods, such as \cite{FlamaryEtAl2018}, before applying the GOF partition.

\section{Discussion and outlook}
\label{sec:discussion}

We have given a comprehensive account on semi-discrete optimal transport for the Euclidean cost function, arguing that there are sometimes good reasons to prefer Euclidean over squared Euclidean cost and showing that for the Euclidean case the semi-discrete setting is particularly nice because we obtain a unique solution to the Monge--Kantorovich problem that is induced by a transport map. We have provided a reasonably fast algorithm that is similar to the AHA-algorithm described in detail in \cite{Merigot2011} but adapted in various aspects to the current situation of $p=1$.

Our algorithm converges towards the optimal partition subject to the convergence conditions for the L-BFGS algorithm; see e.g.~\cite{Nocedal80}. Very loosely, such conditions state that we start in a region around the minimizer where the objective function $\Phi$ shows to some extent quadratic behavior. Similar to the AHA-algorithm in \cite{Merigot2011}, a proof of such conditions is not available. In practice, the algorithm has converged in all the experiments and examples given in the present paper.

There are several avenues for further research, both with regard to improving speed and robustness of the algorithm and for solving more complicated problems where our algorithm may be useful. Some of them are:

\begin{itemize}
  \item As mentioned earlier, it may well be that the choice of our starting value is too simplistic and
  that faster convergence is obtained more often if the sequence $\nu = \nu^{(0)}, \dots, \nu^{(L)}$ of coarsenings is e.g.\ based on the $K$-median algorithm or a similar method. The difficulty lies in finding $\nu^{(l-1)}$ that makes $W_1(\nu^{(l)},\nu^{(l-1)})$ substantially smaller without investing too much time in its computation.

  \item We currently keep the threshold $\eps$ in the stopping criterion of the multiscale approach in Subsection~\ref{ssec:multiscale} fixed. Another alleviation of the computational burden may be obtained by choosing a suitable sequence $\eps_L, \ldots, \eps_0$ of thresholds for the various scales. It seems particularly attractive to use for the threshold at the coarser scale a value $\eps_l > 0$ that is \emph{smaller} than the value $\eps_{l-1}$ at the finer scale, especially for the last step, where $l=1$. The rationale is that at the coarser scale we do not run easily into numerical problems and still reach the stricter $\eps_l$-target efficiently. The obtained weight vector is expected to result in a better starting solution for the finer problem that reaches the more relaxed threshold $\eps_{l-1}$ more quickly than a starting solution stemming from an $\eps_{l-1}$-target at the coarser scale.
%\vspace*{2mm}

	\item The L-BFGS algorithm used for the optimization process may be custom-tailored to our discretization of $\mu$ in order to reach the theoretically maximal numerical precision that the discretization allows. It could e.g.\ use simple gradient descent from the point on where L-BFGS cannot minimize $\Phi$ any further since even in the discretized case the gradient always points in a descending direction.
%\vspace*{2mm}

	\item The approximation of $\mu$ by a fine-grained discrete measure has shown good results. However, as mentioned earlier, higher numerical stability and precision at the expense of complexity and possibly runtime could be obtained by a version of our algorithm that computes the intersections between the Voronoi cells and the pixels of $\mu$ exactly.
%\vspace*{2mm}

  \item Semi-discrete optimal transport may be used as an auxiliary step in a number of algorithms for more complicated problems. The most natural example is a simple alternating scheme for the capacitated location-allocation (or transportation-location) problem; see~\cite{Cooper1972}. Suppose that our fast-food chain from Subsection~\ref{ssec:realloc} has not entered the market yet and would like to open $n$ branches anywhere in the city and divide up the area into delivery zones in such a way that (previously known) capacity constraints of the branches are met and the expected cost in terms of travel distance is minimized again. A natural heuristic algorithm would start with a random placement of $n$ branches and alternate between capacitated allocation of expected orders (the continuous measure $\mu$) using our algorithm described in Section~\ref{sec:algo} and the relocation of branches to the spatial medians of the zones. The latter can be computed by discrete approximation, see e.g.\ \cite{CrouxEtAl2012}, and possibly by continuous techniques, see \cite{FeketeEtAl2005} for a vantage point.
\end{itemize}

\section*{Acknowledgements}
  The authors thank Marcel Klatt for helpful discussions and an anonymous referee for comments that lead to an improvement of the paper.

\bibliographystyle{apalike}      % mathematics and physical sciences
\bibliography{transport}   % name your BibTeX data base

\vspace{8mm}

\noindent
\rule{0.98\textwidth}{0.3pt}
\vspace*{3mm}

{\footnotesize

\noindent
\parbox[t]{80mm}{Valentin Hartmann \\
              IC IINFCOM DLAB \\
              EPFL \\
	      Station 14 \\
              1015 Lausanne, Switzerland \\[2mm]
               \emph{E-mail:} valentin.hartmann@epfl.ch
             }
\parbox[t]{80mm}{Dominic Schuhmacher \\
              Institute for Mathematical Stochastics \\
              University of Goettingen \\
              Goldschmidtstr.\ 7 \\
              37077 Göttingen, Germany \\[2mm]
              \emph{E-mail:} schuhmacher@math.uni-goettingen.de
            }
          }
\vspace*{3mm}

\section*{Appendix: Formulae for affine transformations of measures}
%\label{sec:affinetrafo}

We have the following relations when adding a common measure or multiplying by a common nonnegative scalar. The proof easily extends to a complete separable metric space
instead of $\RR^d$ equipped with the Euclidean metric. 
\begin{lemma} \label{lem:affine}
  Let $\mu,\nu,\alpha$ be finite measures on $\RR^d$ satisfying $\mu(\RR^d) = \nu(\RR^d)$. For $p \geq 1$ and $c > 0$, we have
  \begin{align}
    W_p(\alpha + \mu, \alpha + \nu) &\leq W_p(\mu, \nu), \label{eq:addp} \\
    W_1(\alpha + \mu, \alpha + \nu) &= W_1(\mu, \nu), \label{eq:add1} \\    
    W_p(c \mu, c \nu) &= c^{1/p} \hbit W_p(\mu, \nu),       \label{eq:scmult}
  \end{align}
where we assume for $\eqref{eq:add1}$ that $W_1(\mu, \nu) < \infty$.
\end{lemma}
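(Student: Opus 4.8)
The plan is to treat the three relations separately: \eqref{eq:addp} and \eqref{eq:scmult} by elementary manipulations of couplings, and \eqref{eq:add1} by combining \eqref{eq:addp} with Kantorovich--Rubinstein duality. For \eqref{eq:addp} I would take an arbitrary coupling $\pi$ of $(\mu,\nu)$ and let $\Delta_{\#}\alpha$ denote the push-forward of $\alpha$ under $x \mapsto (x,x)$; then $\bar\pi \defeq \pi + \Delta_{\#}\alpha$ is a measure on $\RR^d\times\RR^d$ whose marginals are $\mu+\alpha$ and $\nu+\alpha$, and since $\norm{x-y}^p$ vanishes on the diagonal,
\[
  \int \norm{x-y}^p\,\bar\pi(dx,dy) = \int \norm{x-y}^p\,\pi(dx,dy).
\]
Taking the infimum over $\pi$ gives \eqref{eq:addp}. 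For \eqref{eq:scmult} I would use that $\pi \mapsto c\pi$ is a bijection between couplings of $(\mu,\nu)$ and couplings of $(c\mu,c\nu)$ under which the transport cost is multiplied by $c$; passing to the infimum and then taking the $p$-th root produces the factor $c^{1/p}$.

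For \eqref{eq:add1}, the inequality ``$\leq$'' is \eqref{eq:addp} specialized to $p=1$, which together with the hypothesis $W_1(\mu,\nu)<\infty$ also shows both sides are finite. For ``$\geq$'' the guiding observation is that the dual functional sees only the signed measure $\mu-\nu$, which is unchanged by adding $\alpha$ to both arguments. Concretely, I would fix a bounded $1$-Lipschitz function $f$ and an arbitrary coupling $\rho$ of $(\alpha+\mu,\alpha+\nu)$; using the marginals of $\rho$, and the boundedness of $f$ together with the finiteness of $\alpha$,
\[
  \int f\,d\mu - \int f\,d\nu = \int f\,d(\alpha+\mu) - \int f\,d(\alpha+\nu) = \int \bigl(f(x)-f(y)\bigr)\,\rho(dx,dy) \leq \int \norm{x-y}\,\rho(dx,dy).
\]
Minimizing the right-hand side over $\rho$ gives $\int f\,d\mu - \int f\,d\nu \leq W_1(\alpha+\mu,\alpha+\nu)$; taking the supremum over all bounded $1$-Lipschitz $f$ and invoking the Kantorovich--Rubinstein theorem \cite[Particular Case~5.16]{Villani2009}, by which this supremum equals $W_1(\mu,\nu)$, yields $W_1(\mu,\nu) \leq W_1(\alpha+\mu,\alpha+\nu)$.

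I expect the delicate point to be this last step. It is important to restrict to \emph{bounded} $1$-Lipschitz test functions, so that $\int f\,d\alpha$ is finite and the cancellation above is legitimate for the a priori only finite (not necessarily integrable) measure $\alpha$; one then checks, by truncating an arbitrary $1$-Lipschitz function and using $W_1(\mu,\nu)<\infty$, that the supremum over bounded $1$-Lipschitz functions is still $W_1(\mu,\nu)$. I would also point out why a naive primal argument fails: removing a diagonal copy of $\alpha$ from an \emph{optimal} coupling of $(\alpha+\mu,\alpha+\nu)$ does not work, since such a coupling need not place mass $\alpha$ on the diagonal, which is exactly why the proof of ``$\geq$'' passes to the dual. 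Finally, all three arguments go through verbatim on a complete separable metric space, since the diagonal-coupling construction needs no linear structure and Kantorovich--Rubinstein duality is available there too.
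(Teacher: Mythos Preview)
Your arguments for \eqref{eq:addp} and \eqref{eq:scmult} coincide with the paper's. For the ``$\geq$'' direction of \eqref{eq:add1} you take a genuinely different route. The paper stays on the primal side: it shows that if $\pi_*$ is optimal for $(\mu,\nu)$, then $\pi_* + \Delta_{\#}\alpha$ is \emph{itself} optimal for $(\mu+\alpha,\nu+\alpha)$, by verifying cyclical monotonicity directly---any cycle in $\supp(\pi_*+\Delta_{\#}\alpha)$ decomposes into diagonal points (which contribute zero on the left and are absorbed via the triangle inequality on the right) and a sub-cycle in $\supp(\pi_*)$. Your dual argument via Kantorovich--Rubinstein is arguably more conceptual, since it makes transparent \emph{why} $p=1$ is special: the dual functional is linear in the marginals and hence depends only on $\mu-\nu$. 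On the other hand, the paper's primal argument sidesteps the care you (rightly) flag about restricting to bounded test functions and justifying that the supremum is unchanged. Your remark that ``a naive primal argument fails'' is worth sharpening: subtracting $\Delta_{\#}\alpha$ from an optimal coupling of $(\mu+\alpha,\nu+\alpha)$ indeed need not yield a coupling of $(\mu,\nu)$, but the paper shows that \emph{adding} $\Delta_{\#}\alpha$ to an optimal coupling of $(\mu,\nu)$ preserves optimality, which is the non-obvious primal fact that does the job.
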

\begin{proof}
  Write $\Delta = \Set{(x,x) \given x \in \RR^d}$. Denote by $\alpha_{\Delta}$ the push-forward of $\alpha$ under the map $[\RR^d \to \RR^d \times \RR^d, x \mapsto (x,x)]$.
  Let $\pi_*$ be an optimal transport plan for the computation of $W_p(\mu, \nu)$. Then $\pi_*+\alpha_{\Delta}$ is a feasible transport plan for $W_p(\alpha + \mu, \alpha + \nu)$ that generates the same total cost as $\pi_*$. Thus 
  \begin{equation*}
    W_p(\alpha + \mu, \alpha + \nu) \leq W_p(\mu, \nu).
  \end{equation*}
  Likewise $c \pi_*$ is a feasible transport plan for $W_p(c \mu, c \nu)$
  that generates $c^{1/p}$ times the cost of $\pi_*$ for the integral in~\eqref{eq:mk}. Thus
  \begin{equation*}
    W_p(c \mu, c \nu) \leq c^{1/p} \hbit W_p(\mu, \nu).
  \end{equation*}
Replacing $c$ by $1/c$, as well as $\mu$ by $c\mu$ and $\nu$ by $c\nu$, we obtain~\eqref{eq:scmult}.

It remains to show $W_1(\alpha + \mu, \alpha + \nu) \geq W_1(\mu, \nu)$. For this we use that a transport plan $\pi$ between $\mu$ and $\nu$ is optimal if and only if it is cyclical monotone, meaning that for all $N \in \NN$ and all $(x_1,y_1), \ldots, (x_N,y_N) \in \supp(\pi)$, we have
\begin{equation*}
  \sum_{i=1}^N \norm{x_i-y_i} \leq \sum_{i=1}^N \norm{x_i-y_{i+1}},
\end{equation*}
where $y_{N+1} = y_1$; see \cite[Theorem~5.10(ii) and Definition~5.1]{Villani2009}.

Letting $\pi_*$ be an optimal transport plan for the computation of $W_1(\mu, \nu)$, we show optimality of $\pi_* + \alpha_{\Delta}$ for the computation of $W_1(\mu+\alpha, \nu+\alpha)$. We know that $\pi_*$ is cyclical monotone. Let $N \in \NN$ and $(x_1,y_1), \ldots, (x_N,y_N) \in \supp(\pi_* + \alpha_{\Delta}) \subset \supp(\pi_*) \cup \Delta$. Denote by $1 \leq i_1 < \ldots < i_k \leq N$, where $k \in \{0,\ldots,N\}$, the indices of all pairs with $x_{i_j} \neq y_{i_j}$, and hence $(x_{i_j},y_{i_j}) \in \supp(\pi_*)$. By the cyclical monotonicity of $\pi_*$ (writing $i_{k+1}=i_1$) and the triangle inequality, we obtain   
\begin{equation*}
  \sum_{i=1}^N \norm{x_i-y_i} = \sum_{j=1}^k \norm{x_{i_j}-y_{i_j}}
  \leq \sum_{j=1}^k \norm{x_{i_j}-y_{i_{j+1}}} \leq \sum_{i=1}^N \norm{x_{i}-y_{i+1}}.
\end{equation*}
Thus $\pi_* + \alpha_{\Delta}$ is cyclical monotone and since it is a feasible transport plan between $\mu+\alpha$ and $\nu+\alpha$, it is optimal for the computation of $W_1(\mu+\alpha, \nu+\alpha)$, which concludes the proof. 
\end{proof}

\begin{remark} \label{rem:W2tozero}
  Equation~\eqref{eq:add1} is not generally true for any $p > 1$. To see this consider
  the case $d=1$, $\mu=\delta_0$, $\nu=\delta_1$ and $\alpha = b \hbit \Leb \vert_{[0,1]}$, where $b \geq 1$.
Clearly $W_p(\mu,\nu)=1$ for all $p \geq 1$. Denote by $F$ and $G$ the cumulative distribution functions (CDFs) of $\mu+\alpha$ and $\nu+\alpha$, respectively, i.e.\ $F(x) = \mu((-\infty,x])$ and $G(x) = \nu((-\infty,x])$ for all $x \in \RR$. Thus
\begin{equation*}
\begin{cases}
  F(x)=G(x)=0 &\text{if $x<0$}, \\
  F(x) = 1+bx,\ G(x) = bx &\text{if $x \in [0,1)$}, \\
  F(x)=G(x)=b+1 &\text{if $x \geq 1$}.
\end{cases}
\end{equation*}
We then even obtain
\begin{equation*}
\begin{split}
  W_p^{\hspace*{0.5pt} \raisebox{1.5pt}{$\scriptstyle p$}}(\alpha + \mu, \alpha + \nu) &= \int_{0}^{b+1} \abs{F^{-1}(t)-G^{-1}(t)}^p \; dt \\
  &= 2 \int_0^1 \frac{t^p}{b^p} \; dt + \frac{1}{b^p} (b-1) 
  = \frac{1}{b^p} \Bigl(b-1 + \frac{2}{p+1} \Bigr) \longrightarrow 0 
\end{split}
\end{equation*}
as $b \to \infty$ if $p>1$. For the first equality we used the representation of $W_p$ in terms of (generalized) inverses of their CDFs; see Equation~(2) in \cite{RipplEtAl2016} and the references given there and note that the generalization from the result for probability measures is immediate by~\eqref{eq:scmult}. 
\end{remark}

\end{document}